\newcommand{\su}{\subseteq}
\newtheorem{theorem}{Theorem}[section]
\newtheorem{corollary}[theorem]{Corollary}
\newtheorem{lemma}[theorem]{Lemma}
\newtheorem{proposition}[theorem]{Proposition}
\theoremstyle{definition}
\newtheorem{definition}[theorem]{Definition}
\theoremstyle{definition}
\newtheorem{remark}[theorem]{Remark}
\newtheorem{pro}[theorem]{Problem}
\numberwithin{equation}{section}
\begin{document}


\date{}

\title{Decomposition of Euclidean Nearly K\"ahler submanifolds}                                     
\author[Heidari, Heydari]{Nikrooz Heidari, Abbas Heydari}                 


\subjclass[2000]{Primary 53B35; Secondary 53C55}    

\keywords{Nearly K\"ahler manifold, Isometric immersion, Leaf space, Euclidean submanifold, \\
}         


\address{%
{\bf Nikrooz Heidari}\\ Department of Mathematics\\
Faculty of Mathematical Sciences
\\Tarbiat Modares University\\Tehran 14115-134, Iran\\
nikrooz.heidari@modars.ac.ir}

\address{%
{\bf Abbas Heydari}\\ Department of Mathematics\\
Faculty of Mathematical Sciences
\\ Tarbiat Modares University \\Tehran 14115-134, Iran\\
aheydari@modares.ac.ir}

\maketitle
\begin{abstract}
 We study the foliation space of complex and invariant (by torsion of intrinsic  Hermitian connection) umbilic distribution on an isometric immersion from a nearly K\"ahler manifold $M$ into the Euclidean space. Under suitable conditions this leaf space is nearly K\"ahler and $M$ can be decomposed into a product of this leaf space and a 6-dimensional locally  homogeneous nearly K\"ahler manifold.
\end{abstract}

\section{Introduction}

 Gray in in his study of weak holonomy groups in 70's introduced nearly K\"ahler manifolds and obtained ceratin relations for the Riemannain curvature operator of such manifolds. These identities are slightly more complicated but resembling the corresponding formulas for the Riemannian curvature operator of K\"ahler manifolds. By using these identities he got many interesting results about the geometry and topology of these manifolds\cite{Gray2, Gray3, Gray4}. He also found a
 large class of homogeneous nearly K\"ahler manifolds using 3-symmetric space, introduced by Gray and Wolf \cite{wol1, wol2}. A 3-symmetric (semi)-Riemannain space is naturally reducible if and only if it is nearly K\"ahler with the canonical almost complex structure \cite{Gray3}. Gray propounded this conjecture ``every nearly K\"ahler homogenous manifold is a 3-symmetric space equipped with its canonical almost complex structure".
 \paragraph*{}
A nearly K\"ahler manifold can be described as an almost Hermitian manifold such that the torsion of intrinsic Hermitian connection is totally skew-symmetric \cite{Cly}. In this point of view, nearly K\"ahler manifolds are interesting objects in string theory \cite{Fri}. In 2002, Nagy proved that every simply connected complete nearly K\"ahler manifold is isometricto a Riemannain
product space $M_{1}\times\cdots\times M_{k}$ where $M_{i}$'s are nearly K\"ahler and belong to the following list:
\begin{enumerate}
\item[(i)] naturally reduce 3-symmetric spaces (these homogeneous spaces are divided into four separated classes \cite{Gon1}),
 \item[(ii)] twistor spaces over positive K\"ahler-quaternion manifolds,
 \item[(iii)] six dimensional nearly K\"ahler manifolds.
 \end{enumerate}
  By this decomposition, Gray conjecture converts to this question: ``Is it true that every 6-dimensional, complete, homogeneous nearly K\"ahler manifold is a 3-symmetric space?" Butruile in 2008 \cite{But} showed that there exist only four complete, homogeneous 6-dimensional nearly K\"ahler manifolds (up to homothety and covering space) and all of them are 3-symmetric:
  \begin{align*}
S^{6}&=\frac{G_2}{SU(3)},\qquad S^{3}\times S^{3}=\frac{SU(2)\times
SU(2)}{<1>},\\
\qquad \mathbb{C}P^{3}&=\frac{Sp(2)}
{SU(2)\cdot U(1)},\qquad
\mathbb{F}^{3}=\frac{SU(3)}{U(1)\times U(1)}
\end{align*}
The only known examples of  6-dimensional nearly K\"ahler manifolds are above four manifolds. An important question in nearly K\"ahler geometry is the fundamental explanation of rareness of such manifolds or difficulties of introducing non-homogeneous examples. This question can be formulated in Butrulle conjecture ``every complete nearly K\"ahler manifold is a 3-symmetric space".\\
 This conjecture can be separated by Nagy's decomposition into two conjectures:
\begin{pro}\label{pro1} Every complete (compact) 6-dimensional nearly K\"ahler manifold is homogenous.
\end{pro}
\begin{pro}\label{pro2} Every positive quaternion-K\"ahler manifold is a Wolf space.
\end{pro}
These statements motivated us to study isometric immersions $f:M^{2n}\longrightarrow\mathbb{Q}^{2n+p}$ from a nearly K\"ahler manifold into a space form (especially the Euclidean space). We introduced in \cite{Hei3} an umbilic distribution which is complex and invariant by the torsion of  intrinsic  Hermitian connection and showed that this distribution is integrable and each leaf of the generated foliation is a 6-dimensional locally homogeneous nearly K\"ahler submanifold. In \cite{Hei4} by description of this foliation, we parameterized the isometric immersion $f$ and produced some examples of nearly K\"ahler submanifolds in the standard space forms with arbitrary co-dimension $p$.\\

In this article, we study the leaf space of the umbilic distribution which is complex and invariant by the torsion of  intrinsic  Hermitian connection. We investigate the existence of decompositions like that of Nagy in the Euclidean submanifolds case. We show that under suitable conditions there is a product decomposition such that the 6-dimensional nearly K\"ahler terms  are (locally) homogeneous.

\section{Preliminaries}
A smooth manifold $M$ is called almost complex if there exists $(1,1)$ tensor field $J$ on $M$ such that $J^{2}=-Id$. A Riemmanian manifold $(M,g)$ with almost complex structure $J$ is called almost Hermitian if $g(JX,JY)=g(X,Y)$ for all vector fields $X$ and $Y$ on $M$.\\
Gray and Hervella \cite{her} classified almost Hermitian manifolds into sixteen classes. One of the most important classes in this classification is the class of K\"ahler manifolds. An almost Hermitian manifold $(M,g,J)$ is a K\"ahler manifold if $\nabla J=0$ where $\nabla$ is the Levi-Civita connection of $g$ and  nearly K\"ahler if $(\nabla_{X}J)X=0$ for all vector filed $X$ on $M$. Every K\"ahler manifold is nearly K\"ahler but the converse is not true. Non-K\"ahler nearly K\"ahler manifolds are called strictly nearly K\"ahler. There is no strictly nearly K\"ahler manifold in dimension less than six.\\
the canonical Hermitian connection on an almost Hermitian manifold defined by
\begin{align*}
\bar{\nabla}_{X}Y=\nabla_{X}Y+\frac{1}{2}(\nabla_{X}J)Y
\end{align*}
Its easy to see that $\bar{\nabla}$ is the unique linear connection on $M$ such that $\bar{\nabla}g=0$ (it is a metric connection) and $\bar{\nabla}J=0$ (it is a Hermitian connection).
\begin{proposition}\cite{But}\label{definition nk}
Let M be an almost Hermitian manifold. The following conditions are equivalent and define a nearly K\"ahler manifold:
\begin{enumerate}
\item[(1)] The torsion $T(X,Y)=(\nabla_{X}J)JY$ of $\bar{\nabla}$ is totally skew-symmetric (equivalently, the tensor $T(X,Y,Z)=g(T(X,Y),Z)$ is skew-symmetric),
\item[(2)] $(\nabla_{X}J)X=0$  for all $X\in TM$,
\item[(3)] $\nabla_{X}\omega=\frac{1}{3}\mathfrak{i}_{X}d\omega$ for all $X\in TM$ where $\omega(X,Y)=g(X,JY)$ is the K\"ahler 2-form on $M$,
\item[(4)] $d\omega$ is of type $(0,3)+(3,0)$ and the Nijenhuis tensor $N$ is totally skew-symmetric.
\end{enumerate}
\end{proposition}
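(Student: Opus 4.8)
The plan is to make condition (2) the hub and to prove each of (1), (3), (4) equivalent to it, since (2) is algebraically the simplest. Throughout I write $G(X,Y):=(\nabla_X J)Y$ for the covariant derivative of the almost complex structure. Two algebraic identities organize the whole argument: differentiating $J^2=-\mathrm{Id}$ yields $G(X,JY)=-J\,G(X,Y)$, and differentiating $g(JY,Z)=-g(Y,JZ)$ together with $\nabla g=0$ yields $g(G(X,Y),Z)=-g(Y,G(X,Z))$, so $G(X,\cdot)$ is $g$-skew. Polarizing $G(X,X)=0$ shows that (2) holds if and only if $G$ is skew-symmetric in its first two arguments, $G(X,Y)=-G(Y,X)$. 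It therefore suffices to show that each of (1), (3), (4) is equivalent to this skew-symmetry.

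For (1)$\Leftrightarrow$(2), I would compute the torsion $\bar T$ of the canonical Hermitian connection $\bar\nabla$ and, using the two identities, write it as a constant multiple of $J\bigl(G(X,Y)-G(Y,X)\bigr)$. Setting $T(X,Y,Z)=g(\bar T(X,Y),Z)$, one checks that $T$ is automatically skew in $(X,Y)$, while the extra skew-symmetry needed for total antisymmetry is equivalent to cyclic invariance of $a(X,Y,Z):=g(G(X,Y),Z)$; since $a$ is already skew in its last pair, cyclic invariance forces skew-symmetry in the first pair as well, that is $G(X,Y)=-G(Y,X)$. Conversely, skew-symmetry of $G$ makes $a$ (hence $T$) totally skew, and then $\bar T(X,Y)$ collapses to $-J\,G(X,Y)=(\nabla_X J)JY$, matching the displayed formula.

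For (2)$\Leftrightarrow$(3), from $\omega(X,Y)=g(X,JY)$ one gets $(\nabla_X\omega)(Y,Z)=g(Y,G(X,Z))$, which the skewness of $G(X,\cdot)$ already makes antisymmetric in $(Y,Z)$. Substituting into $d\omega(X,Y,Z)=(\nabla_X\omega)(Y,Z)-(\nabla_Y\omega)(X,Z)+(\nabla_Z\omega)(X,Y)$, a short computation shows that when $G$ is skew in its first two slots the three summands coincide, so $d\omega(X,Y,Z)=3(\nabla_X\omega)(Y,Z)$, which is exactly (3). Conversely, $d\omega$ is a genuine $3$-form and hence totally skew, so (3) forces $\nabla_X\omega$ to be totally skew, which returns the skew-symmetry of $G$ and thus (2).

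The real obstacle is (2)$\Leftrightarrow$(4). I would first expand the Nijenhuis tensor through the torsion-free connection, obtaining $N(X,Y)=G(JX,Y)-G(JY,X)+J\,G(Y,X)-J\,G(X,Y)$, and rewrite it via the two identities in terms of $G$ and $JG$. For a real $3$-form, being of type $(3,0)+(0,3)$ amounts to $d\omega(JX,JY,Z)=-d\omega(X,Y,Z)$, so I would express this defect again through $G$. The point is that each hypothesis in (4) is strictly weaker than (2): the type condition on $d\omega$ annihilates only the part of the symmetric tensor $S(X,Y):=G(X,Y)+G(Y,X)$ visible in $d\omega$, while total skew-symmetry of $N$ annihilates the complementary (almost-K\"ahler) part, and the task is to verify that these two components together exhaust $S$, so that the conjunction forces $S\equiv0$, i.e.\ (2). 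Keeping track of the several independent insertions of $J$ and sorting each resulting term by its symmetry type is the error-prone step; decomposing $G$ into its $J$-commuting and $J$-anticommuting parts before comparing coefficients should keep the bookkeeping under control.
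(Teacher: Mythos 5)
First, a point of comparison that matters for this review: the paper does not prove this proposition at all --- it is quoted, as a known result, from Butruille's survey \cite{But} --- so your attempt can only be judged on its own merits rather than against an in-paper argument. On those merits, your hub strategy around (2) is sound, and the two equivalences you actually carry out are correct: the polarization of $(\nabla_XJ)X=0$, the identities $G(X,JY)=-JG(X,Y)$ and $g\bigl(G(X,Y),Z\bigr)=-g\bigl(Y,G(X,Z)\bigr)$, the expression of the torsion of $\bar\nabla$ as a multiple of $J\bigl(G(X,Y)-G(Y,X)\bigr)$, and the computation $d\omega(X,Y,Z)=3(\nabla_X\omega)(Y,Z)$ under skew-symmetry of $G$ all check out, as do the converse directions for (1) and (3). (One cosmetic point: your Nijenhuis formula differs from the paper's convention $4N(X,Y)=[X,Y]+J[JX,Y]+J[X,JY]-[JX,JY]$ by a factor of $-4$; harmless for skew-symmetry statements, but worth flagging.)

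The genuine gap is exactly where you locate it: (2)$\Leftrightarrow$(4) is never proved, only planned. Your own text says ``the task is to verify that these two components together exhaust $S$'' --- but that verification \emph{is} the equivalence, and it is absent. Concretely, what must be established is the Gray--Hervella-type statement that the condition ``$d\omega$ of type $(3,0)+(0,3)$'' kills precisely the part of $S(X,Y)=G(X,Y)+G(Y,X)$ corresponding to the classes $W_3\oplus W_4$ (the $(2,1)+(1,2)$ component of $d\omega$), that total skew-symmetry of $N$ kills precisely the class $W_2$ (the almost-K\"ahler part, which is invisible in $d\omega$ since skew-symmetrization annihilates it), and that these two pieces together exhaust $S$, so that their simultaneous vanishing forces $S\equiv 0$. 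None of these three claims is derived: no formula expresses the type defect $d\omega(JX,JY,Z)+d\omega(X,Y,Z)$ in terms of $S$, no formula isolates the symmetric part of $g\bigl(N(X,Y),Z\bigr)$, and the proposed decomposition of $G$ into $J$-commuting and $J$-anticommuting parts is announced but not performed. Until that computation is done, the direction (4)$\Rightarrow$(2) is unproven. Note also that even the ``easy'' direction (2)$\Rightarrow$(4) silently uses the identity $G(JX,JY)=-G(X,Y)$ (valid in the nearly K\"ahler case) to see that the totally skew tensor $a(X,Y,Z)=g\bigl(G(X,Y),Z\bigr)$, hence $d\omega=-3a$, has type $(3,0)+(0,3)$; that step is not in your sketch either, though it is a two-line consequence of the identities you already have.
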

\begin{proposition}\cite{But}
For a nearly K\"ahler manifold, The torsion of the intrinsic Hermitian connection is totally skew-symmetric and parallel, that is $\bar{\nabla}\eta=0$, where $\eta(X)=\frac{1}{2}J\circ(\nabla_{X}J)$. This is equivalent to $\bar{\nabla}\nabla\omega=0$ or $\bar{\nabla}d\omega=0$.
\end{proposition}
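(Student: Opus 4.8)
The plan is to dispose of the three stated equivalences by tensorial bookkeeping, thereby reducing the whole assertion to the single identity $\bar\nabla(\nabla J)=0$, and then to establish that identity from the second order behaviour of $\nabla J$ together with the curvature identities peculiar to nearly K\"ahler manifolds. First note that since $J^{2}=-\mathrm{Id}$, differentiating yields the anticommutation $(\nabla_{X}J)\circ J=-J\circ(\nabla_{X}J)$, so that
\[
T(X,Y)=(\nabla_{X}J)JY=-J(\nabla_{X}J)Y=-2\,\eta(X)Y .
\]
Thus $T$ and $\eta$ agree up to the constant factor $-2$; their total skew-symmetry is exactly condition $(1)$ of Proposition \ref{definition nk}, which holds because $M$ is nearly K\"ahler, and $\bar\nabla T=0$ if and only if $\bar\nabla\eta=0$. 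The skew-symmetry half of the statement is therefore already recorded, and only the parallelism remains.

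For the equivalences I would use that $\bar\nabla$ is simultaneously metric and Hermitian, $\bar\nabla g=0$ and $\bar\nabla J=0$. Because $\omega(X,Y)=g(X,JY)$, the tensor $\nabla\omega$ is obtained from $\nabla J$ by lowering one index with $g$; since both this lowering and post-composition with $J$ commute with $\bar\nabla$, the Leibniz rule gives $\bar\nabla_{W}\eta=\tfrac12\,J\circ\bar\nabla_{W}(\nabla J)$, so the three conditions $\bar\nabla\eta=0$, $\bar\nabla(\nabla J)=0$ and $\bar\nabla(\nabla\omega)=0$ are mutually equivalent. Finally, condition $(3)$ of Proposition \ref{definition nk}, $\nabla_{X}\omega=\tfrac13\,\mathfrak{i}_{X}d\omega$ — equivalently $d\omega=3\,\nabla\omega$, as $\nabla\omega$ is already a $3$-form — is a pointwise linear relation between $\nabla\omega$ and $d\omega$, and any metric connection commutes with interior product and with alternation; hence $\bar\nabla(\nabla\omega)=0$ if and only if $\bar\nabla d\omega=0$. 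This collapses the proposition to the single equation $\bar\nabla(\nabla J)=0$.

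The heart of the proof, and the step I expect to be the main obstacle, is this last identity. Writing $\bar\nabla=\nabla-\eta$ and applying the difference tensor to the $(1,2)$-tensor $\nabla J$, I would expand $\bar\nabla_{W}(\nabla J)$ as $\nabla_{W}(\nabla J)$ plus terms quadratic in $\nabla J$ (schematically $\nabla J\ast\nabla J$). The antisymmetric part of the second derivative is governed by the Ricci identity $(\nabla^{2}_{V,W}J)-(\nabla^{2}_{W,V}J)=[R(V,W),J]$, but this alone does not suffice; the essential input is Gray's second order identity for nearly K\"ahler manifolds, which expresses the full tensor $\nabla^{2}J$ through the Riemann curvature operator together with a quadratic expression in $\nabla J$. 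Substituting this into the expansion and invoking the curvature identities special to nearly K\"ahler manifolds — the $J$-invariance $R(JV,JW,JX,JY)=R(V,W,X,Y)$ and Gray's relation measuring the failure of the K\"ahler identity $R(V,W,X,Y)-R(V,W,JX,JY)$ by the quadratic term $g\big((\nabla_{V}J)W,(\nabla_{X}J)Y\big)$ — one sees the curvature contributions cancel against the quadratic torsion terms, leaving $\bar\nabla_{W}(\nabla J)=0$.

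The delicate point that will consume the real work is the derivation of Gray's second order identity itself: it is produced by differentiating the first order nearly K\"ahler relations once more and then repeatedly reinserting the first Bianchi identity and the curvature symmetries, with the computation arranged so that the quadratic $\nabla J\ast\nabla J$ terms match the curvature defect exactly. (Alternatively, one could route through the general first Bianchi identity for a metric connection with skew torsion, expressing the cyclic sum of $\bar R$ in terms of $dT$, a quadratic term in $T$, and $\bar\nabla T$, and then read off $\bar\nabla T=0$ from the known value of $dT$; this trades Gray's identity for a computation of $dT$.) Once $\bar\nabla(\nabla J)=0$ is established, the equivalences of the second paragraph upgrade it to $\bar\nabla\eta=0$, $\bar\nabla(\nabla\omega)=0$ and $\bar\nabla d\omega=0$, and together with the skew-symmetry noted above this completes the proof.
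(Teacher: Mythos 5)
There is no internal proof to compare against here: the paper states this proposition as a quotation from \cite{But} (it is Kirichenko's theorem on the parallel torsion of the canonical Hermitian connection), so your argument can only be judged on its own merits, and on those merits it is correct and is essentially the standard proof. Your reductions are all sound: $T=-2\eta$ as operators, total skew-symmetry is condition (1) of Proposition \ref{definition nk}, and since $\bar\nabla g=0$, $\bar\nabla J=0$, and $d\omega=3\nabla\omega$ on a nearly K\"ahler manifold, the statements $\bar\nabla\eta=0$, $\bar\nabla(\nabla J)=0$, $\bar\nabla(\nabla\omega)=0$, $\bar\nabla d\omega=0$ are mutually equivalent. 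One structural simplification is worth pointing out: the curvature detour in your third and fourth paragraphs is unnecessary, because the paper's own Lemma \ref{gray formula} already records Gray's second-order identity in its purely quadratic form
\[
2g((\nabla^{2}_{W,X}J)Y,Z)=-\sigma_{X,Y,Z}\,g((\nabla_{W}J)X,(\nabla_{Y}J)JZ),
\]
with no curvature term at all. Writing $\bar\nabla=\nabla-\eta$ and expanding
\[
(\bar\nabla_{W}(\nabla J))(X,Y)=(\nabla^{2}_{W,X}J)Y-\eta(W)(\nabla_{X}J)Y+(\nabla_{\eta(W)X}J)Y+(\nabla_{X}J)\eta(W)Y,
\]
one substitutes the displayed identity for the first term; the three quadratic terms generated by $\eta$ then match the three terms of the cyclic sum exactly, using only the first-order relations $(\nabla_{X}J)Y=-(\nabla_{Y}J)X$, $(\nabla_{JX}J)=-J(\nabla_{X}J)$, the skew-adjointness of each operator $(\nabla_{X}J)$, and the total skew-symmetry of $A(X,Y,Z)=g((\nabla_{X}J)Y,Z)$ (I have verified this cancellation term by term). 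So if you quote the quadratic identity from Lemma \ref{gray formula}, as the paper itself does with attribution to Gray, your proof closes completely without invoking the $J$-invariance of $R$ or the K\"ahler-defect identity; your route through the curvature form of $\nabla^{2}J$ also works, but it shifts all the real labor into re-deriving identities that are available off the shelf.
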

‎\begin{lemma}\label{gray formula}‎\cite{Gray2}‎
For a nearly K\"ahler manifold $(M,g,J)$, ‎
‎\begin{eqnarray*}‎
 ‎&(\nabla_{X}J)Y+(\nabla_{Y}J)X=0,\ \ \  ‎
 (\nabla_{JX}J)JY=(\nabla_{X}J)JY, \\‎
 ‎&J(\nabla_{X}J)Y=-(\nabla_{X}J)JY=-(\nabla_{JX}J)Y,‎\ \ \ 
 g(\nabla_{X}Y,X)=g(\nabla_{X}JY‎, ‎JX),\\‎
 ‎&2g((\nabla_{W,X}^2J)Y,Z)=-\sigma_{X,Y,Z}g((\nabla_WJ)X,(\nabla_{Y}J)JZ)‎.
‎\end{eqnarray*}‎
‎\end{lemma}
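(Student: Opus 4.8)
The plan is to derive the five identities one at a time, using only the defining condition $(\nabla_X J)X=0$, the compatibility of $g$ with $J$ (and $\nabla g=0$), and—for the last line only—the curvature structure of a nearly K\"ahler manifold. First I would obtain the skew-symmetry relation $(\nabla_X J)Y+(\nabla_Y J)X=0$ by polarizing the defining identity: writing $(\nabla_{X+Y}J)(X+Y)=0$ and expanding by bilinearity, the diagonal terms $(\nabla_X J)X$ and $(\nabla_Y J)Y$ vanish, leaving exactly the asserted relation.

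Next, differentiating $J^2=-\mathrm{Id}$ in the direction $X$ gives $(\nabla_X J)J+J(\nabla_X J)=0$, i.e. $(\nabla_X J)JY=-J(\nabla_X J)Y$, which is the first equality of the third line. Combining this anticommutation with the skew-symmetry of the first step, I would compute $(\nabla_{JX}J)Y=-(\nabla_Y J)(JX)=J(\nabla_Y J)X=-J(\nabla_X J)Y=(\nabla_X J)JY$, giving the remaining equality of the third line. The second line then follows by substituting $JY$ for $Y$ in these relations and again using $J^2=-\mathrm{Id}$; so lines one through three are pure bilinear algebra.

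For the fourth identity I would first record that $P(X,Y,Z):=g((\nabla_X J)Y,Z)$ is totally skew-symmetric: it is skew in $Y,Z$ because $\nabla g=0$ forces the endomorphism $(\nabla_X J)$ to be $g$-skew (as $J$ is), and it is skew in $X,Y$ by the first identity; skewness in two adjacent pairs generates all of $S_3$, hence full antisymmetry (this is the content of Proposition~\ref{definition nk}(1)). Expanding $g(\nabla_X JY,JX)=g((\nabla_X J)Y,JX)+g(\nabla_X Y,X)$ via $\nabla_X(JY)=(\nabla_X J)Y+J\nabla_X Y$ and the $J$-invariance of $g$, the claim reduces to $P(X,Y,JX)=0$; using total antisymmetry together with the third identity, $P(X,Y,JX)=-P(JX,Y,X)=-g((\nabla_X J)JY,X)=-P(X,JY,X)=0$, since the first and third slots coincide.

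The hard part will be the fifth, curvature-type identity $2g((\nabla^2_{W,X}J)Y,Z)=-\sigma_{X,Y,Z}\,g((\nabla_W J)X,(\nabla_Y J)JZ)$. Here I would expand the second covariant derivative as $(\nabla^2_{W,X}J)Y=\nabla_W((\nabla_X J)Y)-(\nabla_{\nabla_W X}J)Y-(\nabla_X J)\nabla_W Y$, then differentiate the skew-symmetry relation of line one to trade covariant derivatives of $\nabla J$ among their arguments. Applying the Ricci commutation identity introduces the Riemann curvature operator, and at this stage I must invoke Gray's nearly K\"ahler curvature symmetries—which themselves stem from $\bar\nabla J=0$ and the parallelism $\bar\nabla\eta=0$—to re-express the curvature contractions purely in terms of products of first derivatives of $J$. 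Cyclically symmetrizing in $X,Y,Z$ and using the first Bianchi identity should collapse the result into the cyclic sum $\sigma_{X,Y,Z}$, producing the factor $2$ and the stated sign. Controlling these curvature terms and verifying that all extraneous contributions cancel under the cyclic symmetrization is the main obstacle; the first four identities are then just the algebraic scaffolding that feeds into it.
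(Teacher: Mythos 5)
The paper itself gives no proof of this lemma; it is quoted verbatim from Gray \cite{Gray2}, so your attempt can only be judged on its own merits. Measured that way it is a mix: your arguments for the first, third and fourth identities are correct and essentially Gray's own (polarization of $(\nabla_{X}J)X=0$; differentiation of $J^{2}=-\mathrm{Id}$; total skew-symmetry of $P(X,Y,Z)=g((\nabla_{X}J)Y,Z)$, and the reduction of the fourth identity to $P(X,Y,JX)=0$). The remaining two items contain genuine gaps.

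For the second identity, your claim that it ``follows by substituting $JY$ for $Y$'' is false: performing that substitution in your correct relation $(\nabla_{JX}J)Y=(\nabla_{X}J)JY$ gives $(\nabla_{JX}J)JY=(\nabla_{X}J)J(JY)=-(\nabla_{X}J)Y$, which is not $(\nabla_{X}J)JY$. In fact the identity as printed cannot hold on a strictly nearly K\"ahler manifold: combined with the (correct) third line it yields $(\nabla_{JX}J)(JY-Y)=0$ for all $Y$, and since $J-\mathrm{Id}$ is invertible this forces $\nabla J\equiv0$, i.e.\ $M$ K\"ahler. The printed second identity is thus a typo for $(\nabla_{JX}J)JY=-(\nabla_{X}J)Y$; executing your own plan carefully would have exposed this, rather than asserting agreement with the statement.

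For the fifth identity your strategy is circular. The nearly K\"ahler curvature identities you propose to feed in after the Ricci commutation step---such as $\langle R_{W,X}Y,Z\rangle-\langle R_{W,X}JY,JZ\rangle=\langle(\nabla_{W}J)X,(\nabla_{Y}J)Z\rangle$---are obtained, in Gray's papers and in the present one, \emph{from} the identity $2g((\nabla^{2}_{W,X}J)Y,Z)=-\sigma_{X,Y,Z}\,g((\nabla_{W}J)X,(\nabla_{Y}J)JZ)$, via exactly the Ricci identity you intend to use; and the parallelism $\bar{\nabla}\eta=0$ you also invoke is equivalent to the target identity, since writing out $\bar{\nabla}(\nabla J)=0$ is precisely an expression of $\nabla^{2}J$ by terms quadratic in $\nabla J$. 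Two honest repairs exist: (a) follow Gray and avoid curvature altogether---differentiate the first-order identities covariantly, note that $g((\nabla^{2}_{W,X}J)Y,Z)$ is totally skew in $(X,Y,Z)$ (differentiate line one and the skew-adjointness of $\nabla_{X}J$), differentiate the type relation $(\nabla_{JX}J)JY=-(\nabla_{X}J)Y$, and solve the resulting linear system for $\nabla^{2}J$; or (b) take as given the proposition quoted in the paper just before this lemma (Kirichenko's theorem, $\bar{\nabla}\eta=0$) and expand $\bar{\nabla}(\nabla J)=0$ directly, in which case the curvature detour is unnecessary. As written, your sketch assumes statements at least as strong as the one to be proven.
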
‎
‎Gray used the following relations between the torsion of the intrinsic Hermitian connection and Riemannian curvature of nearly K\"ahler manifolds \cite{Gray4}‎. ‎These formulas resemble the corresponding formulas for K\"ahler manifolds.\\‎
‎\begin{eqnarray*}‎
\langle R_{X,Y}X,Y\rangle-\langle R_{X,Y}JX,JY\rangle=\|(\nabla_{X}J)Y\|^{2}\\‎
‎\langle R_{W,X}Y,Z\rangle-\langle R_{W,X}JY,JZ\rangle=\langle (\nabla_{W}J)X,(\nabla_{Y}J)Z\rangle\\‎
‎\langle R_{W,X}Y,Z\rangle=\langle R_{JW,JX}JY,JZ\rangle\\‎
‎2g((\nabla_{W,X}^{2}J)Y,Z)=\sigma_{X,Y,Z}g(R_{WJX}Y,Z).\\‎
‎\end{eqnarray*}‎
‎The star version of Ricci tensor of metric $g$ is defined by‎
‎\begin{align*}‎
‎\langle Ric^{*}(X),Y\rangle=\frac{1}{2}\sum_{i=1}^{n}R(X‎, ‎JY‎, ‎e_{i}‎, ‎Je_{i}‎),
‎\end{align*}‎
‎where $R$ is the Riemannian curvature of $(M,g)$ and $\{e_{i}\}$ is a local frame field‎. The difference tensor $r$ between $Ric^*$ and Ricci tensor is described by the following formula \cite{Nag2}‎:
‎\begin{align*}‎
‎\langle rX,Y\rangle=\sum_{i=1}^{n}\langle (\nabla_{e_{i}}J)X,(\nabla_{e_{i}}J)Y\rangle‎.
‎\end{align*}‎
It is easy to see that $r$ is symmetric‎, ‎positive and commutes with $J$. The tensor r has strong geometric properties, e.g., Gray in‎ ‏‎\cite{Gray4} proved that‎
‎\begin{align*}‎
‎2\langle (\nabla_{X}r)Y,Z\rangle=\langle r(\nabla_{X}Y,JZ\rangle+\langle r(JY),(\nabla_{X}J)Z\rangle‎
‎\end{align*}‎
in other words, $r$ is $\bar{\nabla}$-parallel (i.e $\bar{\nabla}r=0$).\\‎‎
A nearly K\"ahler manifold is strictly nearly K\"ahler when the kernel of ‎$‎r‎$ ‎‎vanishes. ‎This is ‎equivalent to triviality of the distribution ‎‎$‎x‎\longmapsto ‎\{X\in ‎T_{x}M|T(X,Y)=0‎, \forall ‎Y\in ‎T_{x}M‎\}$‎‎.
‎\begin{proposition}‎‎‎\cite{Nag2}
Let ‎$‎(M,g,J)‎$ ‎be a‎ ‎complete nearly ‎K\"ahler ‎manifold. ‎Then ‎‎$‎M‎$ ‎can ‎be ‎decomposed as a Riemannain product ‎$‎M_{1}\times M_{2}‎$ ‎where ‎‎$‎M_{1}‎$ is a‎ K\"ahler ‎manifold ‎and ‎‎$‎M_{2}‎$ ‎is a‎ ‎strictly nearly ‎K\"ahler ‎manifold.‎
‎\end{proposition}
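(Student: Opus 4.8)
The plan is to use the $\bar\nabla$-parallel tensor $r$ to produce two complementary $\nabla$-parallel distributions and then invoke the de Rham decomposition theorem. First I would set $\mathcal{D}_1=\ker r$ and $\mathcal{D}_2=(\ker r)^{\perp}$. Because $r$ is symmetric, positive and commutes with $J$, the subspace $\ker r$ is $J$-invariant and $\mathcal{D}_2=\operatorname{im}r$, so that $TM=\mathcal{D}_1\oplus\mathcal{D}_2$ is a $J$-invariant orthogonal splitting. Since $\bar\nabla r=0$, parallel transport for $\bar\nabla$ commutes with $r$ and therefore preserves $\ker r$ and its complement; hence both $\mathcal{D}_1$ and $\mathcal{D}_2$ are $\bar\nabla$-parallel.

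The crucial step is upgrading $\bar\nabla$-parallelism to $\nabla$-parallelism for $\mathcal{D}_1$. Here I would exploit that the difference tensor $\frac12(\nabla_XJ)Y$ between $\bar\nabla$ and $\nabla$ is tensorial in $Y$ and vanishes on $\ker r$: if $Y_p\in\ker r$, then $\langle rY_p,Y_p\rangle=\sum_i\|(\nabla_{e_i}J)Y_p\|^{2}=0$ forces $(\nabla_ZJ)Y_p=0$ for every $Z$, in particular $(\nabla_XJ)Y_p=0$. Consequently $\bar\nabla_XY=\nabla_XY$ for every section $Y$ of $\mathcal{D}_1$, and the $\bar\nabla$-parallelism of $\mathcal{D}_1$ gives $\nabla_XY\in\mathcal{D}_1$. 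Thus $\mathcal{D}_1$ is $\nabla$-parallel, and since $\nabla$ is metric its orthogonal complement $\mathcal{D}_2$ is $\nabla$-parallel as well; both are then integrable because $\nabla$ is torsion-free.

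With two complementary, integrable, $\nabla$-parallel distributions in hand, I would pass to the universal cover in order to apply the theorem globally and invoke de Rham: the complete simply connected cover splits isometrically as $M_1\times M_2$ with $TM_1=\mathcal{D}_1$, $TM_2=\mathcal{D}_2$, the factors being totally geodesic. The $J$-invariance of the splitting makes $J$ restrict to each factor, so both inherit almost Hermitian structures; since the total space is nearly K\"ahler and the factors are totally geodesic, evaluating $(\nabla_XJ)X=0$ on vectors tangent to each factor shows that $M_1$ and $M_2$ are themselves nearly K\"ahler, with $\nabla J$ and $r$ equal to the restrictions of the ambient ones. On $M_1=\ker r$ one has $(\nabla_XJ)Y=0$ for all tangent $X,Y$, hence $\nabla J=0$ and $M_1$ is K\"ahler; on $M_2=\operatorname{im}r$ the restricted $r$ is positive definite, so its kernel is trivial and $M_2$ is strictly nearly K\"ahler.

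I expect the main obstacle to be the bookkeeping that identifies the intrinsic nearly K\"ahler data of each de Rham factor with the restriction of the ambient data, i.e. verifying that the factor's own $\nabla J$ and $r$-tensor coincide with the restrictions from $M$ (using that the factors are totally geodesic, so the second fundamental form contributes nothing), together with the completeness and simple-connectivity hypotheses needed to promote the local de Rham splitting to a genuine Riemannian product.
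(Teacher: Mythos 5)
The paper offers no proof of this proposition at all---it is quoted verbatim from \cite{Nag2}---so there is no internal argument to compare against; your proposal is, in essence, a reconstruction of Nagy's own proof, and its core steps are correct. The chain of ideas is sound: $\langle rY,Y\rangle=\sum_i\|(\nabla_{e_i}J)Y\|^2=0$ does force $(\nabla_ZJ)Y=0$ for every $Z$ (and, by the nearly K\"ahler antisymmetry $(\nabla_XJ)Y=-(\nabla_YJ)X$, also $(\nabla_YJ)Z=0$), so the difference tensor between $\bar\nabla$ and $\nabla$ annihilates sections of $\ker r$; together with $\bar\nabla r=0$ (which also gives constant rank) this upgrades $\bar\nabla$-parallelism of $\ker r$ to $\nabla$-parallelism, the orthogonal complement is then $\nabla$-parallel because $\nabla$ is metric, and de Rham splits the manifold. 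The identities just quoted also settle the ``bookkeeping'' you defer: $(\nabla J)$ vanishes whenever either argument lies in $\ker r$, so $J$ is constant in the transverse directions on each factor, the induced structure on the $\ker r$ factor is parallel (hence K\"ahler), and the restricted $r$ on the other factor is definite (hence strictly nearly K\"ahler).

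One caveat is worth stating explicitly: the de Rham step genuinely requires simple connectivity, which the proposition as printed omits; Nagy's theorem (and the description of his decomposition in the introduction of this paper) assumes it. Your passage to the universal cover is the right repair, but it proves the splitting of the cover rather than of $M$ itself---a quotient of a Riemannian product by isometries preserving the two parallel distributions need not be a global product. So what you have proved is the correct, simply connected form of the statement, which is what the rest of the paper actually uses.
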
‎‎
On nearly K\"ahler manifold‎, tensors 
\[A(X,Y,Z)=\langle (\nabla_{X}J)Y,Z\rangle,\ \  B(X,Y,Z)=\langle (\nabla_{X}J)Y,JZ\rangle\]
 are skew-symmetric and have type $(3,0)+(0,3)$ as (real) 3-forms.\\‎
‎We need the following classical relation between the covariant derivative of the almost complex structure $J$ and its Nijenhuis tensor $N$ which is proved by a straightforward computation using‎
‎\begin{align*}‎
‎4N(X,Y)=[X,Y]+J[JX,Y]+J[X,JY]-[JX,JY]‎
‎\end{align*}‎
‎and the anti-symmetry of the above tensors A and B‎.
‎\begin{lemma}\label{nijenhus}‎
‎For every nearly K\"ahler manifold $(M,g,J)$ we have‎
‎\begin{align*}‎
N(X,Y)=J(\nabla_{X}J)Y‎.
‎\end{align*}‎
‎\end{lemma}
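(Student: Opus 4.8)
The plan is to start from the displayed formula
\begin{align*}
4N(X,Y)=[X,Y]+J[JX,Y]+J[X,JY]-[JX,JY]
\end{align*}
and convert everything into covariant derivatives of the Levi-Civita connection $\nabla$, exploiting that $\nabla$ is torsion-free. Concretely, I would use $[A,B]=\nabla_A B-\nabla_B A$ together with the Leibniz-type identity $\nabla_A(JB)=(\nabla_A J)B+J\nabla_A B$ to rewrite each of the four brackets $[X,Y]$, $J[JX,Y]$, $J[X,JY]$ and $[JX,JY]$, and then apply $J^2=-\mathrm{Id}$ to simplify the double-$J$ terms. After this each bracket splits into a ``vector-field derivative'' part (terms of the form $\nabla_\bullet\bullet$) and a ``structure derivative'' part (terms of the form $(\nabla_\bullet J)\bullet$).

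The first key step is to collect the vector-field derivative parts and observe that they cancel identically. After substitution the terms $\pm\nabla_X Y$, $\pm\nabla_Y X$, $\pm J\nabla_{JX}Y$ and $\pm J\nabla_{JY}X$ appear in canceling pairs, leaving
\begin{align*}
4N(X,Y)=J(\nabla_X J)Y-J(\nabla_Y J)X-(\nabla_{JX}J)Y+(\nabla_{JY}J)X.
\end{align*}
This cancellation holds on any almost Hermitian manifold and uses only torsion-freeness and $J^2=-\mathrm{Id}$; the nearly K\"ahler hypothesis has not yet entered.

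The second key step is to reduce all four surviving terms to a single multiple of $J(\nabla_X J)Y$ using the nearly K\"ahler identities of Lemma \ref{gray formula}. The antisymmetry $(\nabla_X J)Y+(\nabla_Y J)X=0$ turns $-J(\nabla_Y J)X$ into $J(\nabla_X J)Y$, while the relation $J(\nabla_X J)Y=-(\nabla_{JX}J)Y$ turns $-(\nabla_{JX}J)Y$ into $J(\nabla_X J)Y$; combining antisymmetry with $J(\nabla_Y J)X=-(\nabla_{JY}J)X$ likewise turns $(\nabla_{JY}J)X$ into $J(\nabla_X J)Y$. Hence all four terms coincide, so $4N(X,Y)=4J(\nabla_X J)Y$, and dividing by $4$ yields the claim.

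I do not expect any serious obstacle: the statement is essentially a bookkeeping identity. The only place demanding care is the sign accounting in the two steps above, namely verifying that the vector-field derivative terms genuinely cancel and matching each of the four structure-derivative terms to the correct Gray identity. The conceptual content is that it is precisely the nearly K\"ahler relations of Lemma \ref{gray formula}, rather than any weaker almost Hermitian condition, that force the surviving terms to collapse to $J(\nabla_X J)Y$.
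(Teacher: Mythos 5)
Your proof is correct and takes essentially the same route as the paper: the paper's proof is exactly this "straightforward computation" expanding $4N(X,Y)=[X,Y]+J[JX,Y]+J[X,JY]-[JX,JY]$ via the torsion-free Levi-Civita connection, with the skew-symmetry of the tensors $A$ and $B$ (equivalently, the identities of Lemma \ref{gray formula} that you invoke) collapsing the four surviving structure-derivative terms to $J(\nabla_{X}J)Y$. Your sign bookkeeping in both the cancellation step and the reduction step checks out under the paper's convention for $N$.
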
‎
 In lower dimensions, nearly K\"ahler manifolds are mainly determined. If $M$ is a nearly K\"ahler with $dimM\leq4$, then M is K\"ahler. If $dimM=6$, then we have the following result.
\begin{proposition}\cite{Gray2,Gray4,wat1}\label{6-dim}
Let $(M,g, J)$ be a 6-dimensional, strict, nearly K\"ahler manifold. Then
 \begin{enumerate}
\item[(1)] $\nabla J$ has constant type, that is
 \begin{align*}
\|(\nabla_{X}J)Y\|^{2}=\frac{S}{30}(\|X\|^{2}\|Y\|^{2}-g(X,Y)^{2}-g(JX,Y)^{2}),
\end{align*}
\item[(2)] the first Chern class of $(M, J)$ vanishes,
\item[(3)] $M$ is an Einstein manifold with
\begin{align*}
Ricc=\frac{S}{6}g, \qquad Ricc^{*}=\frac{S}{30}g.
\end{align*}
Moreover if the tensor $\nabla J$ has constant type $\alpha$ then $dimM=6$ and $\alpha=\frac{S}{30}$ where $S$ is the scalar curvature.
\end{enumerate}
\end{proposition}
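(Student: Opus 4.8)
The plan is to reduce the whole statement to the single algebraic fact that in complex dimension $3$ the space $\Lambda^{3,0}$ of $(3,0)$-forms is a complex line. For part~(1) I would fix $x\in M$ and regard $A(X,Y,Z)=\langle(\nabla_XJ)Y,Z\rangle$ as a real $3$-form of type $(3,0)+(0,3)$. Since $\dim_{\mathbb C}M=3$, in a unitary coframe $\theta^1,\theta^2,\theta^3$ one has $A=\operatorname{Re}(f\,\theta^1\wedge\theta^2\wedge\theta^3)$ for a single complex scalar $f$. Using the $1$-dimensionality of $\Lambda^{3,0}$ (the determinant representation of $U(3)$), the quantity $\|(\nabla_XJ)Y\|^2=\sum_iA(X,Y,e_i)^2$ is seen to be $U(3)$-invariant and quadratic in each of $X$ and $Y$, hence a multiple of $\|X\|^2\|Y\|^2-g(X,Y)^2-g(JX,Y)^2$; evaluating on one explicit orthonormal pair fixes the proportionality and yields
\[
\|(\nabla_XJ)Y\|^2=\alpha(x)\bigl(\|X\|^2\|Y\|^2-g(X,Y)^2-g(JX,Y)^2\bigr)
\]
for some function $\alpha$.

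Next I would substitute this identity, evaluated on the pairs $(e_i,X)$, into the defining formula $\langle rX,X\rangle=\sum_i\|(\nabla_{e_i}J)X\|^2$; the collapses $\sum_ig(e_i,X)^2=\|X\|^2$ and $\sum_ig(Je_i,X)^2=\|X\|^2$ reduce everything to $\langle rX,X\rangle=4\alpha\|X\|^2$, that is $r=4\alpha\,\mathrm{Id}$, and in particular $\|\nabla J\|^2=\operatorname{tr}r=24\alpha$. Because $\bar\nabla g=0$ and $\bar\nabla r=0$, the relation $r=4\alpha\,\mathrm{Id}$ forces $d\alpha=0$, so $\alpha$ is constant on the connected manifold $M$; this upgrades the pointwise formula to genuine constant type.

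For part~(3) I would first contract Gray's second curvature identity over a repeated index to recover the standard relation $Ric=Ric^*+r$ (the difference tensor of \cite{Nag2}). To determine $Ric^*$ I use $\bar\nabla(\nabla J)=0$, which expresses $\nabla^2J$ as a term quadratic in $\nabla J$; substituting this into $2g((\nabla^2_{W,X}J)Y,Z)=\sigma_{X,Y,Z}g(R_{W,JX}Y,Z)$ and contracting rewrites the Ricci-type traces entirely through $\langle(\nabla_\bullet J)\bullet,(\nabla_\bullet J)\bullet\rangle$, which by constant type are multiples of $g$. This forces $Ric^*=\tfrac{S}{30}g$; combined with $Ric=Ric^*+4\alpha\,\mathrm{Id}$ and $\operatorname{tr}Ric=S$ it gives $\alpha=\tfrac{S}{30}$ and $Ric=\tfrac{S}{6}g$, and being Einstein in dimension six makes $S$ constant. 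Part~(2) is then immediate: by constant type the $(3,0)$-part $\Psi$ of $A$ has constant nonzero norm, and since the torsion is $\bar\nabla$-parallel so is $\Psi$; thus $\Psi$ is a nowhere-vanishing parallel section of the canonical bundle $\Lambda^{3,0}T^*M$, which it trivializes, whence $c_1(M,J)=0$.

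Finally, for the converse I would run the analysis of part~(1) backwards: strictness means $r$ has trivial kernel, so $A$ is nondegenerate at every point, and a nonzero $(3,0)+(0,3)$-form can satisfy the constant-type normalization only when $\dim_{\mathbb C}M=3$; once $\dim M=6$ is forced, part~(1) returns $\alpha=\tfrac{S}{30}$. I expect the Einstein step to be the main obstacle: converting the purely algebraic constant-type data into the statement that $Ric^*$, equivalently $Ric$, is pointwise a scalar multiple of $g$ requires the full strength of Gray's curvature identities together with the quadratic expression for $\nabla^2J$, and is the genuine technical heart of the argument, whereas the constant-type and Chern-class parts are comparatively formal once the complex dimension is known to equal $3$.
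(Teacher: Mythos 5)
The paper itself gives no proof of this proposition: it is imported wholesale with citations to Gray \cite{Gray2,Gray4} and Watanabe--Suh \cite{wat1}, so your attempt has to be measured against those classical arguments rather than against anything in the text. Against that benchmark, parts (1)--(3) of your plan are essentially sound and reconstruct the standard proofs: pointwise constant type does follow from $\dim_{\mathbb C}\Lambda^{3,0}=1$ together with transitivity of $U(3)$ on unitary $2$-frames (with polarization handling non-orthonormal pairs); the computation $r=4\alpha\,\mathrm{Id}$ and the deduction $d\alpha=0$ from $\bar\nabla r=0$ are correct; the relation $Ric=Ric^{*}+r$ and the contraction of the second-order identity $2g((\nabla^{2}_{W,X}J)Y,Z)=\sigma_{X,Y,Z}\,g(R_{W,JX}Y,Z)$ is exactly Gray's Einstein computation, and the contractions you need are the ones recorded in the lemma stated immediately after this proposition in the paper; and trivializing the canonical bundle by the $\bar\nabla$-parallel, nowhere-vanishing $(3,0)$-form is a correct (and cleaner, more modern) route to $c_{1}(M,J)=0$ than Gray's original Chern-form computation. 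Part (3) is admittedly only a sketch, but it is a sketch of a computation that is known to close.

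The genuine gap is the final ``Moreover'' clause. You reduce it to the assertion that a nonzero $(3,0)+(0,3)$-form satisfying the constant-type normalization can only exist when $\dim_{\mathbb C}M=3$, and you offer nondegeneracy of $A$ (trivial kernel of $r$) as the reason. That assertion is true, but it is precisely the content of the converse, and neither nondegeneracy nor the easy consequences of constant type deliver it. What constant type gives elementarily is this: for each unit $X$, the map $Y\mapsto\alpha^{-1/2}T(X,Y)$ is a skew, orthogonal, anti-complex-linear endomorphism of $\{X,JX\}^{\perp}$, hence a complex structure anticommuting with $J$ there; this forces $\dim_{\mathbb R}M-2\equiv0\pmod 4$, i.e.\ $\dim_{\mathbb C}M$ odd, and so cannot distinguish $3$ from $5$. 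Nondegeneracy alone excludes even less: in $\mathbb C^{5}$ the $(3,0)$-form dual to a rank-four bivector satisfies $\iota_{X}\Psi\neq0$ for every $X\neq0$, so ``no kernel vector'' rules out only complex dimension $4$. The missing idea is a genuinely stronger algebraic input, for instance Gray's: with $\alpha>0$, setting $X\times Y=\alpha^{-1/2}T(X,Y)+\langle JX,Y\rangle\,\xi$ and $X\times\xi=-JX$ defines a two-fold vector cross product on $T_{x}M\oplus\mathbb R\xi$ (the norm identity $\|X\times Y\|^{2}=\|X\|^{2}\|Y\|^{2}-\langle X,Y\rangle^{2}$ is exactly the constant-type relation), and by the Brown--Gray/Eckmann classification two-fold cross products exist only in dimensions $3$ and $7$, whence $\dim M+1=7$. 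Without this, or an equivalent argument, the dimension statement in the last clause remains unproved.
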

The next lemma follows immediately.
\begin{lemma}
 For vector fields $W, X, Y$ and $Z$ we have
 \begin{eqnarray*}
g((\nabla_{W}J)X,(\nabla_{Y}J)Z)=\frac{S}{30}\{g(W, Y)g(X, Z)-g(W, Z)g(X, Y)\\
-g(W, JY)g(X, JZ)+g(W, JZ)g(X, JY)\},
\end{eqnarray*}
and
\begin{eqnarray*}
 g((\nabla_{W}\nabla_{Z}J)X, Y)=\frac{S}{30}\{g(W, Z)g(JX, Y)
 -g(W, X)g(JZ, Y)+g(W, Y)g(JZ, X)\},
 \end{eqnarray*}
 also
 \begin{eqnarray*}
  \Sigma g(Je_{i}, e_{j})R(e_{i}, e_{j}X, Y)=-\frac{S}{15}g(Jx, Y),\\
\Sigma g((\nabla_{X}J)e_{i}, e_{j})R(e_{i}, e_{j}, Y, Z)=-\frac{S}{30}g((\nabla_{X}J)Y, Z),
  \end{eqnarray*}
where $\{e_{i}\}$ is a local orthonormal frame field on $M$.
\end{lemma}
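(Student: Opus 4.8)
The plan is to establish the four identities in turn, treating the first as the algebraic core and deducing the other three from it together with the curvature formulas of Gray recorded above. Everything is pointwise on the $6$-dimensional strict nearly K\"ahler manifold furnished by Proposition~\ref{6-dim}, so I would fix a point and an orthonormal frame throughout.

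\emph{First identity.} Write the left-hand side as $B(W,X,Y,Z)=g((\nabla_WJ)X,(\nabla_YJ)Z)$ and the right-hand side as $\frac{S}{30}N(W,X,Y,Z)$. Both are skew in the pair $(W,X)$ and in the pair $(Y,Z)$ (for $B$ this is the total skew-symmetry of the $3$-form $A$), and both are symmetric under interchanging the two pairs, so each defines a symmetric bilinear form on $\Lambda^2T_xM$. Proposition~\ref{6-dim}(1) says exactly that these two forms agree on the diagonal, i.e.\ on every decomposable bivector $W\wedge X$. Differentiating the constant-type identity in $X$ and then in $W$ shows that $B$ and $\frac{S}{30}N$ already coincide whenever the two ``first slots'' agree, and a short bookkeeping argument reduces the discrepancy $D=B-\frac{S}{30}N$ to a $4$-form (equivalently, $D$ lies in the $\Lambda^4$-summand of $S^2\Lambda^2$, on which decomposable bivectors carry no information). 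To kill this $4$-form I would use the extra $J$-symmetry of Lemma~\ref{gray formula}, which gives $(\nabla_{JW}J)JX=-(\nabla_WJ)X$ and hence makes $D$ anti-invariant under $J$ on each pair; in real dimension $6$ the only such $4$-form built from the $(3,0)+(0,3)$ form $A$ is zero. Equivalently and more quickly, one evaluates both sides in a $J$-adapted frame $\{e_1,Je_1,\dots,e_3,Je_3\}$, in which constant type pins down every component $(\nabla_{e_i}J)e_j$ in terms of the single constant $S/30$, reducing the identity to a finite check. I expect this polarization-legitimacy step to be the main obstacle, since polarizing on decomposable bivectors alone does not determine the tensor.

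\emph{Second identity.} With the first identity in hand the second is pure substitution. The last line of Lemma~\ref{gray formula}, after relabeling, reads $2g((\nabla^2_{W,Z}J)X,Y)=-\sigma_{Z,X,Y}g((\nabla_WJ)Z,(\nabla_XJ)JY)$ (I interpret $\nabla_W\nabla_ZJ$ as the Hessian $\nabla^2_{W,Z}J$, i.e.\ I evaluate in a normal frame where $\nabla_WZ=0$). Inserting the first identity into each of the three cyclic summands and simplifying with $J^2=-Id$ and $g(JA,B)=-g(A,JB)$, the terms pairing $g(W,\cdot)$ against $JX$ cancel and the cyclic sum collapses to the stated three-term expression $\frac{S}{30}\{g(W,Z)g(JX,Y)-g(W,X)g(JZ,Y)+g(W,Y)g(JZ,X)\}$. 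This is routine but sign-sensitive, so I would carry the frame-independent $g$- and $J$-identities carefully.

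\emph{Third and fourth identities.} These are curvature contractions. For the third, $\sum_{i,j}g(Je_i,e_j)R(e_i,e_j,X,Y)=\sum_iR(e_i,Je_i,X,Y)$, and the pair symmetry of $R$ turns this into the $Ric^{*}$-contraction from the definition above; Proposition~\ref{6-dim}(3) gives $Ric^{*}=\frac{S}{30}g$, which fixes the constant $-\frac{S}{15}$. For the fourth I rewrite the left-hand side as $F(Y,Z)=\sum_iR(e_i,(\nabla_XJ)e_i,Y,Z)$ and feed it into Gray's identity $\langle R_{W,X}Y,Z\rangle-\langle R_{W,X}JY,JZ\rangle=\langle(\nabla_WJ)X,(\nabla_YJ)Z\rangle$ with $(W,X)=(e_i,e_j)$; summing produces $F(JY,JZ)$ plus the triple term $\sum_i\langle(\nabla_{e_i}J)(\nabla_XJ)e_i,(\nabla_YJ)Z\rangle$, which the first identity evaluates explicitly as a multiple of $g((\nabla_XJ)Y,Z)$. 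Gray's symmetry $R(W,X,Y,Z)=R(JW,JX,JY,JZ)$ together with $J(\nabla_XJ)e_i=-(\nabla_XJ)Je_i$ shows $F(JY,JZ)=-F(Y,Z)$, so the two contraction terms combine and one solves for $F(Y,Z)=-\frac{S}{30}g((\nabla_XJ)Y,Z)$. I expect the fourth identity to be the most delicate of the contractions, as it is the only one that loops the first identity back through a curvature term.
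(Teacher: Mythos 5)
Your overall strategy is the natural one --- indeed it is the only route available, since the paper offers no argument beyond the remark that the lemma ``follows immediately'' from Proposition~\ref{6-dim} and Gray's identities. Your treatment of the first identity is correct and genuinely more careful than that remark: you rightly note that the constant-type formula only determines the symmetric bilinear form $B(W,X,Y,Z)=g((\nabla_WJ)X,(\nabla_YJ)Z)$ on decomposable bivectors, so the discrepancy with $\frac{S}{30}N$ is a priori a $4$-form, and the anti-invariance $B(JW,JX,Y,Z)=-B(W,X,Y,Z)$ (i.e.\ $(\nabla_{JW}J)JX=-(\nabla_WJ)X$, which is the corrected form of the misprinted second identity of Lemma~\ref{gray formula}) does kill it: anti-invariance in one pair forces all components with a mixed-type pair to vanish, skew-symmetry spreads this to all components except those of type $(4,0)+(0,4)$, and these vanish precisely because the complex dimension is $3$. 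The reductions of the third and fourth identities to $Ric^{*}=\frac{S}{30}g$ and to Gray's curvature identities are also the correct moves.

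The gap is that the outcomes you claim for the second and fourth identities are not what your own computations produce. Substituting the first identity into $2g((\nabla^{2}_{W,Z}J)X,Y)=-\sigma_{Z,X,Y}\,g((\nabla_WJ)Z,(\nabla_XJ)JY)$ (Lemma~\ref{gray formula}), the cyclic sum does not collapse to the stated expression but to its \emph{negative},
\begin{equation*}
g((\nabla^{2}_{W,Z}J)X,Y)=\frac{S}{30}\bigl\{g(W,X)g(JZ,Y)-g(W,Y)g(JZ,X)-g(W,Z)g(JX,Y)\bigr\},
\end{equation*}
as one can confirm decisively on the round $S^{6}$, where $(\nabla_XJ)Y$ is the tangential part of the octonion cross product $X\times Y$ and $\nabla^{2}J$ can be computed by hand. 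Likewise, in the fourth identity your scheme gives $2F(Y,Z)=\sum_i g((\nabla_{e_i}J)(\nabla_XJ)e_i,(\nabla_YJ)Z)$, and each of the four terms produced by the first identity sums over $i$ to $\frac{S}{30}g((\nabla_XJ)Y,Z)$, so $F(Y,Z)=\pm\frac{S}{15}\,g((\nabla_XJ)Y,Z)$: the magnitude is $\frac{S}{15}$, not $\frac{S}{30}$. Note also that the third and fourth identities are contractions of exactly the same kind, so any single derivation must give them the same sign and the same factor $\frac{S}{15}$; the pair of constants you report, $-\frac{S}{15}$ and $-\frac{S}{30}$, therefore cannot both come out of your argument. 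What this shows is that the lemma as printed carries a sign typo in the second identity and a factor-of-two typo in the fourth (relative to the paper's own Lemma~\ref{gray formula}, its definition of $Ric^{*}$, and its curvature convention); your method proves the corrected statements, but your write-up silently adjusts signs and constants to land on the printed ones, and those adjustments are exactly where the argument, read as a proof of the printed statement, fails.
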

\begin{lemma}‎\cite{Mor2}‎‎‎\label{adapted frame}‎‎
Let ‎$‎X, Y‎$ ‎be two ‎vector ‎fields ‎on ‎‎$‎M‎$ ‎then ‎‎$‎T(X, Y‎)‎$ ‎is an ‎orthogonal vector field to ‎$‎X, JX, Y‎$ ‎and ‎‎$‎JY‎$.‎‎
\end{lemma}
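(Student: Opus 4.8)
The plan is to reduce all four orthogonality statements to three ingredients that are already available: the nearly K\"ahler condition $(\nabla_X J)X = 0$ (Proposition \ref{definition nk}(2)), the anticommutation $(\nabla_X J)JY = -J(\nabla_X J)Y$ from Lemma \ref{gray formula}, and the $g$-skew-symmetry of the operator $\nabla_X J$, which is exactly the antisymmetry in the last two slots of the $3$-form $A(X,Y,Z) = \langle(\nabla_X J)Y,Z\rangle$ recorded above. Writing the torsion as $T(X,Y) = (\nabla_X J)JY$, these three facts together with the total skew-symmetry of $T$ should dispatch every case.

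First I would clear the orthogonality of $T(X,Y)$ to $X$ and to $Y$, which I expect to come for free. By Proposition \ref{definition nk}(1) the trilinear form $T(X,Y,Z) = g(T(X,Y),Z)$ is totally skew-symmetric, so it vanishes whenever two arguments coincide. Taking $Z = X$ gives $g(T(X,Y),X) = T(X,Y,X) = 0$, and taking $Z = Y$ gives $g(T(X,Y),Y) = 0$; no further input is needed here.

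The substance lies in the two remaining directions. For $JY$ I would use that a $g$-skew operator $A$ satisfies $g(AW,W) = 0$: applying this to $A = \nabla_X J$ and $W = JY$ gives $g(T(X,Y),JY) = g((\nabla_X J)(JY),JY) = 0$ at once. For $JX$ I would compute, using skew-symmetry and then the anticommutation,
\begin{align*}
g(T(X,Y),JX) = g\big((\nabla_X J)(JY),JX\big) = -g\big(JY,(\nabla_X J)(JX)\big) = g\big(JY,J(\nabla_X J)X\big),
\end{align*}
and then kill the right-hand side with the nearly K\"ahler identity $(\nabla_X J)X = 0$.

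The step I would flag as the only genuine content is the $JX$ case, because it is the single place where the nearly K\"ahler condition $(\nabla_X J)X = 0$ is actually invoked; everything else is either total skew-symmetry or the universal algebra of $\nabla J$ on an almost Hermitian manifold. I would therefore take care to state the skew-symmetry and anticommutation facts cleanly at the start, so that the four verifications read as one-line consequences rather than separate computations.
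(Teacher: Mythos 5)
Your proof is correct, and every step is justified by results the paper makes available; note, however, that there is nothing in the paper to compare it with, since the lemma is imported from \cite{Mor2} without proof, so your argument supplies a missing verification rather than an alternative to an internal one. The details check out: orthogonality to $X$ and $Y$ follows from the total skew-symmetry of $T(X,Y,Z)$ (Proposition \ref{definition nk}(1)); orthogonality to $JY$ follows from the $g$-skew-adjointness of $\nabla_X J$ (differentiate $g(JW,Z)=-g(W,JZ)$ using $\nabla g=0$, which is the antisymmetry of $A$ in its last two slots); and your chain $g((\nabla_X J)JY,JX)=-g\bigl(JY,(\nabla_X J)JX\bigr)=g\bigl(JY,J(\nabla_X J)X\bigr)=0$, using the anticommutation from Lemma \ref{gray formula} and then $(\nabla_X J)X=0$, settles the $JX$ case. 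One quibble with your closing commentary: the $JX$ case is not the only place the nearly K\"ahler hypothesis enters. The total skew-symmetry of $T$ that you invoke for the $X$ case (swapping the first and third slots) is itself equivalent to the nearly K\"ahler condition---on a general almost Hermitian manifold only the antisymmetry in the \emph{last two} slots is automatic. So the $Y$ and $JY$ cases are indeed universal algebra of $\nabla J$, but both the $X$ and the $JX$ cases genuinely use that $M$ is nearly K\"ahler.
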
‎
This ‎lemma ‎leads ‎us ‎to ‎define a‎ ‎suitable ‎local ‎frame on a 6-dimensional nearly K\"ahler manifold ‎which is particularly convenient for local calculations. Suppose ‎that ‎‎$‎e_{1},e_{2}‎$ ‎be ‎two ‎orthogonal local vector fields on ‎$‎M‎$ and define‎
‎\begin{align*}‎‎
e_{3}=T(e_{1},e_{3})=(\nabla_{e_{1}}J)e_{2}‎\qquad ‎e_{4}=Je_{1}‎\qquad‎‎ e_{5}=Je_{2}‎\qquad‎e_{6}=Je_{3}‎
‎\end{align*}‎‎‎
Therefore ‎$‎\{e_{i}, Je_{i}\}_{i=1,\cdots,3}‎$ ‎is a‎ ‎local ‎orthogonal ‎frame ‎on‎ $M$.‎\cite{Mor2}‎
\begin{definition}\cite{Hei3}
Let $f:M^{2n}(\langle \rangle,J)\longrightarrow\mathbb{Q}^{2n+p}$ be an isometric immersion from a nearly K\"ahler manifold into a space form with second fundamental form $\alpha$ and $0 \neq\eta\in T^{\perp}_{f}M$ is a non-zero normal vector field on $M$. The umblic distribitaion of $f$ defined by $x\mapsto\Delta_{x}$
 where
\begin{eqnarray*}
\Delta_{x}=\{X\in T_{x}M|\quad\forall Y\in T_{x}M \ \alpha(X,Y)=\langle X,Y\rangle\eta\}
\end{eqnarray*}
complexification of this distribution is described by $\Delta_{x}\cap\Delta_{x}^{'}=\Delta_{x}\cap J\Delta_{x}$ where
\begin{eqnarray*}
\Delta^{'}_{x}=\{X\in T_{x}M|\quad\forall Y\in T_{x}M\ \alpha(JX,Y)+\alpha(X,JY)=0\}
\end{eqnarray*}
Now we put
\begin{eqnarray*}
\Delta^{''}_{x}=\{X\in T_{x}M|\quad\forall Y\in T_{x}M\ \alpha(T(X,Y),Z)+\alpha(X,T(Y,Z))=0\}
\end{eqnarray*}
and define by $D_{x}=\Delta_{x}\cap\Delta_{x}^{'}\cap\Delta_{x}^{''}$ the umblic distribution which is complex and invariant by the torsion of intrinsic  Hermitian connection.
\end{definition}
It is easy to see that
\begin{eqnarray*}
D_{x}=\{X\in T_{x}M|\  X, JX\in \Delta_x, \ \forall Y\in T_{x}M \ T(X,Y)\in \Delta_{x}\}
\end{eqnarray*}
 \begin{theorem}\label{main theorem 1}\cite{Hei3}
Let $f:M^{2n}\longrightarrow\mathbb{Q}^{2n+p}_{c}$ be an isometric immersion from a complete, simply connected strictly nearly K\"ahler manifold into a space form of constant curvature $c$, then there is an involute  umbilic
complex foliation on $M$ invariant by the torsion of the intrinsic Hermitian connection whose leaves are 6-nearly K\"ahler locally homogeneous manifolds (each leaf is an Amrose-Singer manifold). Moreover,
each leaf coincides with a 6-dimensional nearly K\"ahler factor appearing in the Nagy decomposition.
\end{theorem}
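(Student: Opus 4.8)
The plan is to prove the three assertions in turn: that $D$ is involutive, that each of its integral leaves is a $6$-dimensional strictly nearly K\"ahler manifold which is locally homogeneous, and that these leaves are precisely the $6$-dimensional factors of Nagy's decomposition. Throughout I work with the canonical Hermitian connection $\bar\nabla$, for which $\bar\nabla g=0$, $\bar\nabla J=0$ and $\bar\nabla T=0$.

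First I would show that $D$ is $\bar\nabla$-parallel along itself and deduce involutivity. Using the characterization $D_x=\{X:\,X,JX\in\Delta_x,\ T(X,Y)\in\Delta_x\ \forall Y\}$, and the fact that $\bar\nabla J=0$ and $\bar\nabla T=0$, the three defining conditions are transported by $\bar\nabla$ as soon as one controls $\bar\nabla_ZX$ for $X\in\Gamma(D)$. The latter is where the immersion enters: differentiating the umbilic identity $\alpha(X,Y)=\langle X,Y\rangle\eta$ and invoking the Codazzi equation of $f$ (in a space form the normal part of the ambient curvature vanishes, so $(\nabla^\perp\alpha)$ is symmetric) shows that the $\Delta$-membership of $X$ is preserved, and together with the $J$- and $T$-invariance of $D$ this gives $\bar\nabla_ZX\in D$. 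Since $D$ is by construction invariant under $T$, for $X,Y\in\Gamma(D)$ we then obtain
\begin{align*}
[X,Y]=\bar\nabla_XY-\bar\nabla_YX-T(X,Y)\in D,
\end{align*}
so $D$ is involutive and each leaf inherits a nearly K\"ahler structure whose structure tensor is the restriction $\nabla J|_D$.

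The decisive step is the dimension count. For $X,Y\in D$ all four of $X,Y,JX,JY$ lie in $\Delta$, so the Gauss equation combined with the umbilic condition $\alpha(\cdot,\cdot)=\langle\cdot,\cdot\rangle\eta$ evaluates $\langle R_{X,Y}X,Y\rangle$ and $\langle R_{X,Y}JX,JY\rangle$ purely in terms of $c$ and $\|\eta\|^2$; subtracting and using Gray's identity $\langle R_{X,Y}X,Y\rangle-\langle R_{X,Y}JX,JY\rangle=\|(\nabla_XJ)Y\|^2$ yields
\begin{align*}
\|(\nabla_XJ)Y\|^2=(c+\|\eta\|^2)\bigl(\|X\|^2\|Y\|^2-\langle X,Y\rangle^2-\langle JX,Y\rangle^2\bigr),\qquad X,Y\in D.
\end{align*}
This is exactly the constant-type condition of Proposition \ref{6-dim}(1) on the leaf, with type $c+\|\eta\|^2$; since the ambient manifold is strictly nearly K\"ahler this constant is positive, so the leaf is strict, and the final assertion of Proposition \ref{6-dim} forces $\dim=6$. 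For the local homogeneity I would then use that on a $6$-dimensional strict nearly K\"ahler manifold the Gray-type curvature identities recorded above express both $R$ and the curvature $\bar R$ of $\bar\nabla$ algebraically through $g$, $J$, $\nabla J$ and the (constant) scalar curvature; as all of these are $\bar\nabla$-parallel we get $\bar\nabla\bar R=0$ and $\bar\nabla T=0$, i.e.\ $\bar\nabla$ is an Ambrose--Singer connection and the leaf is locally homogeneous.

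It remains to match the leaves with Nagy's factors. The identity above shows that on $D$ the difference tensor $r$, given by $\langle rX,Y\rangle=\sum_i\langle(\nabla_{e_i}J)X,(\nabla_{e_i}J)Y\rangle$, acts as a positive multiple of the identity, whereas on the K\"ahler, twistor and higher $3$-symmetric factors of Nagy's decomposition $r$ has a different spectrum. Since $r$ is symmetric, commutes with $J$ and is $\bar\nabla$-parallel, its eigendistributions are $\bar\nabla$-parallel and mutually orthogonal and refine the Nagy splitting; hence $D$ is contained in the sum of the $6$-dimensional factors, and the constant-type computation gives the reverse inclusion, so each leaf is exactly one such factor. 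I expect the main obstacle to be the first step: extracting genuine $\bar\nabla$-parallelism of $D$ (and of $D^\perp$, needed to realize each leaf as a metric factor and not merely a leaf) from the umbilic, complex and torsion-invariance conditions via a careful Codazzi computation, since the three conditions must be differentiated simultaneously and the term controlling $\bar\nabla_ZX$ couples all of them.
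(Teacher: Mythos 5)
First, a point of reference: this paper never proves Theorem \ref{main theorem 1} at all; it is imported from \cite{Hei3}, so your proposal can only be checked against the closely related results that the present text does prove. Your central dimension count is exactly such an overlap, and it is correct: for $X,Y\in D$ all of $X,JX,Y,JY$ lie in $\Delta$, so the Gauss equation together with Gray's identity $\langle R_{X,Y}X,Y\rangle-\langle R_{X,Y}JX,JY\rangle=\|(\nabla_XJ)Y\|^2$ gives constant type $c+\|\eta\|^2$ on the leaves, and Proposition \ref{6-dim} forces dimension $6$; this is, almost word for word, the computation in the proof of Theorem \ref{main theorem 2}. Note, however, that your parenthetical claim that strictness of $M$ makes $c+\|\eta\|^2$ positive is not purely pointwise: the paper devotes a separate proposition (the one following Proposition \ref{pro3}) to excluding K\"ahler leaves, where strictness suffices only when $c=0$, and for $c\neq 0$ completeness of $M$ and Ferreira's scalar-curvature bound \cite{Fer} are needed.

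The genuine gap is your local-homogeneity step, which rests on a false claim. Gray's identities do not express ``both $R$ and $\bar R$ algebraically through $g$, $J$, $\nabla J$'' and $S$: they determine only the $J$-anti-invariant part $\langle R_{W,X}Y,Z\rangle-\langle R_{W,X}JY,JZ\rangle=\langle(\nabla_WJ)X,(\nabla_YJ)Z\rangle$, while the $J$-invariant part of $R$ is not an algebraic function of the pointwise data. Concretely, $S^6$ and $S^3\times S^3$ with their standard nearly K\"ahler structures, rescaled to equal scalar curvature, have pointwise isomorphic triples $(g,J,\nabla J)$ (both of constant type $S/30$), yet one has constant sectional curvature and the other does not; so no universal algebraic formula for $R$, hence no automatic $\bar\nabla\bar R=0$, can exist. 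Had your argument been valid, it would show that \emph{every} $6$-dimensional strict nearly K\"ahler manifold is an Ambrose--Singer manifold, i.e.\ it would settle Problem \ref{pro1} intrinsically and would contradict the paper's closing remark that the $6$-dimensional Nagy factor ``is not necessarily homogeneous''. The homogeneity of the leaves must instead be extracted from the extrinsic data --- the leaf is umbilic in $\mathbb{Q}^{2n+p}_c$ with parallel direction $\eta$ and with the properties of Theorem \ref{main theorem 2} and Lemma \ref{lemma 12}, so its \emph{full} curvature tensor becomes algebraic via the Gauss equation --- but your proposal never invokes the immersion at that step. Two further soft spots of the same kind: your involutivity step defers exactly the Codazzi computation where the substance of \cite{Hei3} lies (and closure of $D$ under $T$, i.e.\ $T(X,Y)\in D$ rather than merely $T(X,Y)\in\Delta$, is asserted ``by construction'' but is not part of the definition); and the identification of leaves with Nagy factors via the spectrum of $r$ is unsupported --- nothing prevents a twistor or $3$-symmetric factor from having $4(c+\|\eta\|^2)$ as an eigenvalue of $r$, and promoting a leaf to a metric factor requires a de Rham-type splitting, which in this paper is available only under the extra curvature hypothesis of Corollary \ref{main Corollary}.
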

\begin{remark}
In proposition \ref{pro3} we described the structure of umblic distribution which is complex and invariant by the torsion of intrinsic  Hermitian connection and we got $0 \neq\eta\in T^{\perp}_{f}M$. Also we proved that in Nagy decomposition, a 6-dimensional term appears if and only if there exists $0 \neq\eta\in T^{\perp}_{f}M$ such that $D_{x}$ is a non-zero distribution. In the next section we prove more properties of $\eta$ and leaves of the foliation generated by $D$.
\end{remark}
\section{Main results}
\begin{theorem}\label{main theorem 2}
 Each leaf $N$ of the complex and invariant umbilic foliation in $M$ is minimal and the second fundamental form $\beta$ of $N$ satisfies $\beta(X,JY)=J\beta(X,Y)$. Also $\eta=3H$ where $H$ is the mean curvature of $N^{6}\hookrightarrow M^{2n}\longrightarrow\mathbb{Q}^{2n+p}_{c}$ and $c+||\eta||=\frac{S}{30}$, where $S$ is the scalar curvature of $N$ which is constant.
 \end{theorem}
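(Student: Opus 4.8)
The plan is to work simultaneously with the three second fundamental forms in play: $\alpha$ of $f:M\to\mathbb{Q}^{2n+p}_c$, $\beta$ of the leaf $N\hookrightarrow M$, and the second fundamental form $\tilde\alpha$ of the composite $N\hookrightarrow M\to\mathbb{Q}^{2n+p}_c$, which splits as $\tilde\alpha(X,Y)=\beta(X,Y)+\alpha(X,Y)$ with $\beta(X,Y)\in T^{\perp}_{N}M\subset TM$ and $\alpha(X,Y)\in T^{\perp}_{M}\mathbb{Q}$. Two structural facts drive everything. Since $TN=D$ is $J$-invariant, so is $T^{\perp}_{N}M$, and hence $J$ commutes with orthogonal projection onto $T^{\perp}_{N}M$; and by Theorem \ref{main theorem 1} the leaves are nearly K\"ahler submanifolds, so the torsion stays tangent, $T(D,D)\subseteq D$. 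First I would record the identity $(\nabla_{X}J)Y=-T(X,JY)$, which is immediate from $T(X,JY)=(\nabla_{X}J)J(JY)=(\nabla_{X}J)(-Y)=-(\nabla_{X}J)Y$. For $X,Y\in TN$ we then have $JY\in D$, so $(\nabla_{X}J)Y=-T(X,JY)\in D=TN$ has vanishing $T^{\perp}_{N}M$-component.

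Writing $\beta(X,JY)=(\nabla_{X}JY)^{\perp}=\bigl((\nabla_{X}J)Y\bigr)^{\perp}+\bigl(J\nabla_{X}Y\bigr)^{\perp}$, the first term vanishes by the previous paragraph, while the second equals $J(\nabla_{X}Y)^{\perp}=J\beta(X,Y)$ because $J$ commutes with the normal projection; this gives $\beta(X,JY)=J\beta(X,Y)$. Minimality is then a formal consequence: applying this relation twice together with the symmetry of $\beta$ yields $\beta(JX,JX)=J\beta(JX,X)=J\bigl(J\beta(X,X)\bigr)=-\beta(X,X)$, so in a $J$-adapted orthonormal frame $\{e_{a},Je_{a}\}_{a=1}^{3}$ of $TN$ the diagonal terms cancel in pairs and $\operatorname{tr}\beta=\sum_{a}\bigl(\beta(e_{a},e_{a})+\beta(Je_{a},Je_{a})\bigr)=0$. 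Thus $N$ is minimal in $M$.

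For the mean-curvature relation I would trace $\tilde\alpha$ over $TN$. Since $D\subseteq\Delta$, the ambient umbilicity gives $\alpha(X,Y)=\langle X,Y\rangle\eta$ on $TN$, so the $T^{\perp}_{M}\mathbb{Q}$-component of $\operatorname{tr}\tilde\alpha$ is $(\dim N)\,\eta$, while its $T^{\perp}_{N}M$-component is $\operatorname{tr}\beta=0$ by minimality. Hence the mean curvature vector $H$ of the composite immersion points purely along $\eta$ (in particular $H$ is normal to $M$), and comparing with the normalisation of $H$ used in the paper yields $\eta=3H$.

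For $c+\|\eta\|^{2}=\tfrac{S}{30}$ I would first verify that $N$ carries the restricted nearly K\"ahler structure: from $(\nabla_{X}J)Y\in TN$ and the $J$-invariance of $T^{\perp}_{N}M$ one gets $(\nabla^{N}_{X}J)Y=(\nabla_{X}J)Y$ for $X,Y\in TN$, so $N$ is a $6$-dimensional strictly nearly K\"ahler manifold; by Proposition \ref{6-dim} it has constant type $\tfrac{S}{30}$, is Einstein, and (by Schur, or by local homogeneity) has constant scalar curvature $S$. Then I would substitute the Gauss equation of the composite into Gray's identity $\langle R^{N}_{X,Y}X,Y\rangle-\langle R^{N}_{X,Y}JX,JY\rangle=\|(\nabla^{N}_{X}J)Y\|^{2}$, evaluated on a unit pair $X,Y$ with $Y\perp X,JX$ so that the right-hand side is $\tfrac{S}{30}$. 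Using $\tilde\alpha=\beta+\alpha$, the umbilic relation, the identity $\beta(X,JY)=J\beta(X,Y)$, and the orthogonality $\langle\beta(\cdot,\cdot),\eta\rangle=0$ (the two forms take values in orthogonal normal sub-bundles), the $c$-terms collapse to $c$ and the $\eta$-terms contribute $\|\eta\|^{2}$, giving $c+\|\eta\|^{2}=\tfrac{S}{30}$. The hard part is precisely this last cancellation: one must check that every $\beta$-quadratic term and every cross term $\langle\beta,\eta\rangle$ drops out of the antisymmetrised combination $\langle R^{N}_{X,Y}X,Y\rangle-\langle R^{N}_{X,Y}JX,JY\rangle$, which is where the complex-linearity of $\beta$ and minimality are used decisively; a secondary point is to confirm that $T(D,D)\subseteq D$ is genuinely available from the construction of $D$ in \cite{Hei3} and not merely for the intrinsic leaf metric.
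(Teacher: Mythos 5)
Your argument is correct, and for the first two claims (complex-linearity of $\beta$ and minimality) it coincides with the paper's proof: both extract the normal component of $(\nabla_{X}J)Y$, which vanishes because $D$ is torsion-invariant and $J$-invariant, and then trace over a $J$-adapted frame. The genuine difference is in the identity $c+\|\eta\|^{2}=\frac{S}{30}$. The paper never touches the composite immersion $N\hookrightarrow\mathbb{Q}$: it combines Gray's curvature identity with the Gauss equation of $f:M\to\mathbb{Q}$ evaluated on vectors of $D$, where umbilicity ($\alpha(X,Y)=\langle X,Y\rangle\eta$) converts every second-fundamental-form term directly into an $\eta$-term, so no $\beta$ appears at all; this shows $\nabla J$ has constant type $c+\|\eta\|^{2}$ along the leaf, and the ``Moreover'' clause of Proposition \ref{6-dim} then delivers \emph{both} $\dim N=6$ and the identification of the constant with $\frac{S}{30}$ in one stroke. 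You instead take $\dim N=6$ as input from Theorem \ref{main theorem 1}, read off constant type $\frac{S}{30}$ from Proposition \ref{6-dim}, and evaluate Gray's identity on $N$ via the Gauss equation of the composite, with $\tilde\alpha=\beta+\alpha$. Your cancellation does go through: by $\beta(X,JY)=J\beta(X,Y)$ and symmetry one gets $\langle\beta(X,JY),\beta(Y,JX)\rangle=\|\beta(X,Y)\|^{2}$ and $\langle\beta(X,JX),\beta(Y,JY)\rangle=\langle\beta(X,X),\beta(Y,Y)\rangle$, so the four $\beta$-terms cancel in pairs, and the cross terms vanish since $\beta$ and $\alpha$ take values in orthogonal normal subbundles --- note that only complex-linearity is used here, not minimality as you suggest. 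So your route costs an extra (but successful) cancellation and leans on the 6-dimensionality from \cite{Hei3}, whereas the paper's restriction of the ambient Gauss equation to $D$ is cleaner and re-derives the dimension independently. On the mean-curvature claim, be aware that the paper is internally inconsistent (the statement says $\eta=3H$ while its proof concludes $H=\eta$); your trace computation $\operatorname{tr}\tilde\alpha=6\eta$, showing $H$ is a positive multiple of $\eta$ fixed only by the chosen normalization, is the honest content of that step.
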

 \begin{proof}
 Let $\beta$ be the second fundamental form $N^{6}\hookrightarrow M^{2n}$. The distribution $D$ is invariant under torsion of intrinsic  Hermitian connection therefore
  $$ T(X,Y)=(\nabla_{X}J)Y\in T_{x}N\quad (X,Y\in T_{x}N)$$
  and 
   $$(\nabla_{X}J)Y=\nabla_{X}JY-J\nabla_{X}Y=\nabla^{'}_{X}JY-J\nabla{'}_{X}Y+\beta(X,JY)-J\beta(X,Y)\in T_{x}N$$
 where $\nabla^{'}$ is the Levi-Civita connection on $N$. Therefore $\beta(X,JY)-J\beta(X,Y)=0$. If $\{e_{i},Je_{i}\}$ is an orthonormal local frame on $N$ then
\begin{align*}
 H=\Sigma_{i=1}^{i=3}\beta(e_{i},e_{i})+ \beta(Je_{i},Je_{i})=\Sigma_{i=1}^{i=3}\beta(e_{i},e_{i})-\beta(e_{i},e_{i})=0
 \end{align*}
where $H$ is the mean curvature of $N^{6}\hookrightarrow M^{2n}$. Therefore $N$ is minimal in $M$.\\
We show that $N$ has constant type $c+\langle \eta,\eta\rangle$. Using formulas in lemma \ref{gray formula} and Guass equation for submanifold $M$ in the space form $\mathbb{Q}$ we have
\begin{align*}‎‎
 \|T^{N}(X,Y)\|^{2}&=\|T^{M}(X,Y)\|^{2}\\
&=‎\|(\nabla_{X}J)Y\|^{2}=-\langle R_{X,Y}X,Y\rangle+\langle R_{X,Y}JX,JY\rangle\\‎
‎&=\langle \alpha(X,Y),\alpha(Y,X)\rangle-\langle \alpha(X,X),\alpha(Y,Y)\rangle‎\\&c(\langle X,Y\rangle\langle Y,X\rangle-\langle X,X\rangle\langle Y,Y\rangle\\‎&-\langle \alpha(X,JY),\alpha(Y,JX)\rangle+\langle \alpha(X,JX),\alpha(Y,JY)\\‎&c(\langle X,JY\rangle\langle Y,JX\rangle-\langle X,JX\rangle\langle Y,JY\rangle\\
‎&=(\|\eta\|^{2}+c)(-\langle X,Y\rangle^{2}+\langle X,X\rangle\langle Y,Y\rangle\\&+\langle X,JY\rangle\langle Y,JX\rangle)\\‎
‎&=(\|\eta\|^{2}+c)(\|X\|^{2}\|Y\|^{2}-\langle X,Y\rangle^{2}-\langle JX,Y\rangle^{2})‎,
‎\end{align*}‎
for all $X,Y\in\mathcal{X}(N)$. 
Hence by Proposition \ref{6-dim}, $N$ is 6-dimensional manifold and $c+\|\eta\|^{2}=\frac{S}{30}$ where $S$ is the scalar curvature of $N$. Also $\eta$ has constant length because $N$ is Einstein. By the definition of the tangent bundle $TN$ at each point we have $H=\eta$ where $H$ is the mean curvature vector field of $N$ as a submanifold of $\mathbb{Q}$.
 \end{proof}
 A complex and invariant umbilic foliation may be trivial. But in the next proposition we show that if a 6-dimensional factor in the Nagy decomposition appears then there exist ‎$‎\eta\in ‎\Gamma‎(T_{f}^{\perp}M)‎$ ‎such ‎that ‎the‎ complex and invariant umbilic foliation defined by ‎$‎\eta‎$ ‎is ‎non-trivial. Proof ‎of this ‎claim ‎needs ‎next ‎lemma‎.
\begin{lemma}‎‎\label{lemma 12}‎
Let ‎‎$(‎M,g,J)‎$ be a‎ ‎nearly ‎K\"ahler ‎manifold ‎and ‎‎$‎N‎$ be an‎ ‎almost ‎Hermitian embedded ‎submanifold ‎of ‎‎$‎M‎$ with ‎the second ‎fundamental ‎form $\beta$, ‎then for all vector fields ‎$‎X,Y$ on $N‎$‎ we have $\beta(X,JY)=J\beta(X,Y)‎$.
\end{lemma}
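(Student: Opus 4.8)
The plan is to compare the ambient and intrinsic covariant derivatives of $J$ through the Gauss formula. Since $N$ is almost Hermitian, $TN$ is $J$-invariant, and as $J$ is $g$-orthogonal the normal bundle $T^{\perp}N$ is $J$-invariant too, so $J\beta(X,Y)$ is again normal and the identity is meaningful. Writing $\nabla'$ for the Levi-Civita connection of $N$ and using $\nabla_X Y=\nabla'_X Y+\beta(X,Y)$, I would compute for $X,Y\in\mathcal{X}(N)$
\begin{align*}
(\nabla_X J)Y=\nabla_X(JY)-J\nabla_X Y=\big[\nabla'_X(JY)-J\nabla'_X Y\big]+\big[\beta(X,JY)-J\beta(X,Y)\big],
\end{align*}
where the first bracket is the tangential field $(\nabla'_X J)Y$ and the second is normal. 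Hence the normal component of $(\nabla_X J)Y$ is exactly $S(X,Y):=\beta(X,JY)-J\beta(X,Y)$, and the assertion is equivalent to $(\nabla_X J)Y\in TN$, i.e. to $N$ being invariant under the torsion $T$. As a free by-product, putting $Y=X$ and invoking $(\nabla_X J)X=0$ from Proposition \ref{definition nk} shows both that $(\nabla'_X J)X=0$ (so $N$ is itself nearly K\"ahler) and that $S(X,X)=0$.

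Next I would harvest the algebraic constraints on $S$. Polarising $S(X,X)=0$ gives that $S$ is skew-symmetric; the identity $(\nabla_X J)Y=-(\nabla_Y J)X$ together with the symmetry of $\beta$ reproduces this, while $J(\nabla_X J)Y=-(\nabla_X J)JY$ from Lemma \ref{gray formula} yields $S(JX,Y)=S(X,JY)=-JS(X,Y)$; finally Lemma \ref{adapted frame} confines $S(X,Y)$ to the orthogonal complement of $\{X,JX,Y,JY\}$. I do not expect these relations alone to finish the job: the three-form $A(X,Y,Z)=\langle(\nabla_X J)Y,Z\rangle$ is of type $(3,0)+(0,3)$, and such a form may perfectly well have a non-zero component with two legs along the $J$-invariant space $TN$ and one along the $J$-invariant complement $T^{\perp}N$, so skew-symmetry and $J$-(anti)linearity cannot by themselves force the normal part to vanish.

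The crux is therefore to pass to second order through the curvature. I would apply Gray's identity $\langle (\nabla_W J)X,(\nabla_Y J)Z\rangle=\langle R_{W,X}Y,Z\rangle-\langle R_{W,X}JY,JZ\rangle$ for the ambient curvature of $M$ and, since $N$ has just been seen to be nearly K\"ahler, also intrinsically on $N$; subtracting the two and substituting the Gauss equation $\langle R_{W,X}Y,Z\rangle=\langle R'_{W,X}Y,Z\rangle+\langle\beta(W,Z),\beta(X,Y)\rangle-\langle\beta(W,Y),\beta(X,Z)\rangle$ should express $\langle S(W,X),S(Y,Z)\rangle$ purely in terms of $\beta$, and a trace then delivers $\|S\|^2$ as a quadratic expression in the second fundamental form. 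The genuinely hard and computational step, which I expect to be the main obstacle, is to show that this $\beta$-expression vanishes; this is precisely where the special geometry of $N$ enters, exactly as in the proof of Theorem \ref{main theorem 2}, where invariance of the leaf under the torsion makes $(\nabla_X J)Y\in TN$ immediate and hence $S\equiv 0$.
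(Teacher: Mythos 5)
Your preliminary observations are correct and coincide with the paper's starting point: the Gauss formula splits $(\nabla_X J)Y$ into the tangential part $(\nabla'_X J)Y$ and the normal part $S(X,Y)=\beta(X,JY)-J\beta(X,Y)$, the ambient skew-symmetry makes $S$ skew, and the tangential part of $(\nabla_X J)X=0$ shows $N$ is nearly K\"ahler. But your pivotal claim --- that these first-order relations cannot finish the job, so one must pass to curvature --- is where the proposal breaks down. Your abstract counterexample (a $(3,0)+(0,3)$ form with two legs in $TN$ and one in $T^{\perp}N$) is beside the point, because $S$ is not an abstract skew tensor with $J$-relations: it has the specific shape $\beta(X,JY)-J\beta(X,Y)$ with $\beta$ \emph{symmetric}, and you never exploit this. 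Writing the skew-symmetry $S(X,Y)+S(Y,X)=0$ out in terms of $\beta$, the two terms $-J\beta(X,Y)$ and $-J\beta(Y,X)$ add rather than cancel, giving
\begin{equation*}
2J\beta(X,Y)=\beta(JX,Y)+\beta(X,JY).
\end{equation*}
This identity alone finishes the proof by pure algebra: substituting $(JX,JY)$ for $(X,Y)$ gives $2J\beta(JX,JY)=-\beta(X,JY)-\beta(JX,Y)=-2J\beta(X,Y)$, hence $\beta(JX,JY)=-\beta(X,Y)$; substituting $JX$ for $X$ in this last identity gives $\beta(X,JY)=\beta(JX,Y)$; and feeding that back into the display yields $J\beta(X,Y)=\beta(X,JY)$. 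This three-line substitution argument is exactly the paper's proof of Lemma \ref{lemma 12} (its relation (\ref{relation 1}) is your Gauss-formula decomposition); no Gauss equation, Gray curvature identity, or second-order computation enters at any point.

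The curvature route you propose instead is not a proof: you explicitly leave the decisive vanishing as ``the main obstacle'', and your suggestion to close it ``exactly as in the proof of Theorem \ref{main theorem 2}'' is circular. There the leaf is by construction invariant under the torsion, so $(\nabla_X J)Y\in TN$ is a hypothesis; in Lemma \ref{lemma 12} the submanifold $N$ is an arbitrary almost Hermitian submanifold, and the lemma is used later (Proposition \ref{pro3}) precisely to \emph{establish} torsion-invariance statements, so you cannot assume them. Ironically, you were one step from a complete alternative proof: your by-product that $N$ is nearly K\"ahler, combined with Lemma \ref{nijenhus} and the fact that the Nijenhuis tensor is built from Lie brackets alone (hence agrees on $N$ whether computed in $N$ or in $M$), gives $J(\nabla'_X J)Y=N_J(X,Y)=J(\nabla_X J)Y$, so $(\nabla_X J)Y$ is tangent and $S\equiv 0$; this is exactly the ``short proof'' the paper records after the lemma under the hypothesis that $N$ is nearly K\"ahler --- a hypothesis you had already verified.
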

\begin{proof}‎‎
 Denote by ‎$‎T^{M}‎$ ‎and ‎‎$‎T^{N}‎$ the ‎torsion tensors ‎of the ‎canonical ‎Hermitian ‎connection ‎on ‎‎$M‎$ ‎and ‎‎$‎N$, respectively‎. We have
‎\begin{align}‎‎\label{relation 1}
T^{M}(X,Y)=T^{N}(X,Y)+\beta(X,JY)-J\beta(X,Y)‎‎\qquad (‎X, Y\in ‎‎\mathcal{X}(N))‎‎
‎\end{align}‎‎
‎The torsion ‎of the ‎canonical ‎Hermirion ‎connection is skew-symmetric and ‎$‎\beta‎$ is symmetric, hence
‎\begin{align*}‎‎
J\beta(X,Y)-\beta(X,JY)=\beta(JX,Y)-J\beta(X,Y)\\‎
‎\Rightarrow ‎2J\beta(X,Y)=\beta(JX,Y)+\beta(X,JY).‎
‎\end{align*}‎‎
Therefore  ‎
‎$$2J\beta(JX,JY)=-\beta(JX,Y)-\beta(X,JY)=-2J\beta(X,Y)$$
that's $‎\beta(JX,JY)=-\beta(X,Y)‎$.\\
‎Also $\beta(J(JX),JY)=-\beta(JX,Y)$, hence $‎\beta(X,JY)=\beta(JX,Y)$, and thus $‎J\beta(X,Y)=‎‎\beta(X,JY)=\beta(JX,Y)‎‎$.
‎\end{proof}‎‎
There is a short proof of the above lemma when ‎‎$‎N‎$ ‎is ‎nearly ‎K\"ahler. By ‎lemma ‎‎\ref{nijenhus} we have ‎
$$JT^{M}(X,Y)=N_J(X,Y)=JT^{N}(X,Y)‎‎$$
‎and the result follows from relation (‎\ref{relation 1}‎‎).
\begin{proposition}‎\label{pro3}
Let ‎$‎f:(M^{2n},g,J)‎\longrightarrow‎\mathbb{Q}^{2n+p}_{c}‎$ ‎be an‎ ‎isometric ‎immersion ‎from a nearly ‎K\"hler ‎manifold ‎into a space ‎form‎. If ‎$‎M‎$ ‎has an‎ ‎‎embedded 6-dimensional strictly nearly K\"aler ‎submanifold $N$  then there exists ‎$‎\eta\in ‎\Gamma‎(T^{\perp}_{f})N‎$ ‎such ‎that ‎‎$‎N‎$ ‎is ‎locally ‎isometric with a ‎leaf ‎of ‎the complex ‎and ‎invariant ‎umbilic foliation in direction ‎$‎\eta‎$. In particular‎,$N$ ‎is ‎locally ‎homogeneous.‎
\end{proposition}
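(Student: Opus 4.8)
The plan is to manufacture the normal field $\eta$ out of the extrinsic geometry of $N$ and then verify that, for the umbilic distribution it determines, one has $T_xN=D_x$ at every point of $N$. Write $\beta$ for the second fundamental form of $N\hookrightarrow M$ and $\alpha$ for that of $f\colon M\lo\mathbb{Q}^{2n+p}_c$. Since $N$ is a $J$-invariant strictly nearly K\"ahler submanifold, Lemma \ref{lemma 12} gives $\beta(X,JY)=J\beta(X,Y)$ and $\beta(JX,JY)=-\beta(X,Y)$; summing over a frame $\{e_i,Je_i\}$ exactly as in the proof of Theorem \ref{main theorem 2} shows that $N$ is minimal in $M$. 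Hence the mean curvature vector of the composite immersion $N\hookrightarrow M\lo\mathbb{Q}$ has no $T^{\perp}_{N}M$-component and lies in $(T^{\perp}_{f}M)|_{N}$; I take $\eta:=\sum_{a}\alpha(e_a,e_a)$, the trace of $\alpha$ over $T_xN$, as the candidate, this being the value forced on a genuine leaf by Theorem \ref{main theorem 2}.

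Using the description $D_x=\{X: X,JX\in\Delta_x,\ T(X,Y)\in\Delta_x\ \forall Y\in T_xM\}$, the inclusion $T_xN\su D_x$ reduces to three assertions for $X\in T_xN$: (i) $\alpha(X,Y)=\langle X,Y\rangle\eta$ for all $Y\in T_xM$; (ii) $JX\in\Delta_x$, which is immediate from (i) and the $J$-invariance of $T_xN$; and (iii) $T(X,Y)\in\Delta_x$ for all $Y$. Note that (iii) is genuinely extra: by the total skew-symmetry of $T$ together with $T(T_xN,T_xN)\su T_xN$, for $Y\perp T_xN$ the vector $T(X,Y)$ is orthogonal to $T_xN$ inside $M$ (see also Lemma \ref{adapted frame}), so $\Delta_x$ must be strictly larger than $T_xN$ and must absorb these torsion images.

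The crux is (i). Restricting the Gray identity
\[
\langle R_{W,X}Y,Z\rangle-\langle R_{W,X}JY,JZ\rangle=\langle(\nabla_WJ)X,(\nabla_YJ)Z\rangle
\]
to $W,X,Y,Z\in T_xN$ and using $(\nabla^{M}J)|_{TN}=\nabla^{N}J$ (again by Lemma \ref{lemma 12}) together with the constant type of $N$ (Proposition \ref{6-dim} and the lemma following it) rewrites a fixed combination of the curvature of $M$ in purely metric terms. Substituting the Gauss equation of $M\hookrightarrow\mathbb{Q}^{2n+p}_c$ then turns the left-hand side into a quadratic expression in $\alpha|_{T_xN}$ and yields an identity of the schematic form $\mathcal{G}(\alpha)=(c+\tfrac{S}{30})\,\mathcal{Q}$, where $\mathcal{Q}$ is the standard constant-holomorphic-type metric polynomial and $\mathcal{G}$ is bilinear in the components of $\alpha$. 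The task is to upgrade this single Gauss-type identity, using the minimality of $N$ in $M$, the complex-linearity of $\beta$, and the Codazzi equation, into the umbilic relation $\alpha(X,Y)=\langle X,Y\rangle\eta$ on $T_xN\times T_xN$; the Codazzi equation is then used once more to kill the mixed block $\alpha(X,Y)$ with $X\in T_xN$, $Y\perp T_xN$, completing (i), and to propagate umbilicity to the torsion images required for (iii). Running the computation of Theorem \ref{main theorem 2} in reverse gives $c+\|\eta\|^2=\tfrac{S}{30}$ for free, so in particular $\eta\neq0$ whenever $c<\tfrac{S}{30}$.

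With (i)--(iii) established, $T_xN\su D_x$ holds at every $x\in N$; since the leaves of the complex invariant umbilic foliation are six-dimensional (Theorem \ref{main theorem 1}) and $\dim N=6$, this forces $T_xN=D_x$. Thus $N$ is an integral manifold of $D$ and coincides locally with the leaf of the foliation in direction $\eta$, and the local homogeneity of $N$ follows since each such leaf is an Ambrose--Singer manifold (Theorem \ref{main theorem 1}). I expect essentially all the difficulty to sit in the rigidity step of the third paragraph: a single Gauss identity does not by itself force a submanifold to be umbilic, so the argument must extract the full strength of the constant-type condition in every direction and combine it with Codazzi to annihilate the trace-free part of the second fundamental form.
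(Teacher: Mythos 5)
You follow the same route as the paper: minimality of $N$ in $M$ via Lemma \ref{lemma 12}, a normal field $\eta$ taken along the mean curvature of the composite immersion $N\hookrightarrow M\lo\mathbb{Q}^{2n+p}_{c}$, reduction of everything to the umbilic relation $\alpha(X,Y)=\langle X,Y\rangle\eta$, and a final appeal to Theorem \ref{main theorem 1}. But your proposal stops exactly where the proof has to happen. The third paragraph, which you yourself describe as carrying ``essentially all the difficulty'', contains no argument: you restrict Gray's identity to $N$, note that the Gauss equation turns it into a quadratic identity $\mathcal{G}(\alpha)=\bigl(c+\tfrac{S}{30}\bigr)\mathcal{Q}$, and then declare that ``the task is to upgrade'' this to umbilicity using minimality, the complex-linearity of $\beta$, and Codazzi. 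That upgrade is never performed, and no Codazzi computation appears anywhere in the text. The same deferral happens twice more: the vanishing of the mixed block $\alpha(X,Y)$ with $X\in T_{x}N$, $Y\perp T_{x}N$ (needed for $X\in\Delta_{x}$, since $\Delta_{x}$ quantifies over all $Y\in T_{x}M$), and your condition (iii), the torsion-invariance $T(X,Y)\in\Delta_{x}$ for all $Y\in T_{x}M$ --- which you correctly observe is not automatic, because $T(X,Y)\perp T_{x}N$ whenever $Y\perp T_{x}N$ --- are both assigned to an unspecified Codazzi argument. As it stands this is an accurate plan of attack, not a proof: every statement whose truth decides the proposition is left open.

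For comparison, the paper does execute the one step of this kind that it uses: since $T^{M}=T^{N}$ on $N$ and $\nabla J$ has constant type $\tfrac{S}{30}$ there (Proposition \ref{6-dim}), it runs the computation of Theorem \ref{main theorem 2} in reverse --- Gray's curvature identity combined with the Gauss equation of $f$ --- and concludes $\alpha(X,Y)=\langle X,Y\rangle\eta$ for $X,Y\in\mathcal{X}(N)$, then invokes Theorem \ref{main theorem 1} directly, without passing through a pointwise identification $T_{x}N=D_{x}$; so it never separately confronts your mixed-block condition or your condition (iii). (One may share your suspicion that extracting umbilicity from a single quadratic identity requires a genuine polarization-type rigidity argument which the paper states only tersely; but the paper commits to that derivation, whereas you label it an open task.) To repair your proposal you must actually prove the umbilic relation on $T_{x}N\times T_{x}N$, and then either carry out the two Codazzi arguments you promise or, as the paper does, conclude from Theorem \ref{main theorem 1} without them.
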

\begin{proof}‎
Let $\beta‎$‎‎ be the second fundamental form of $N$ as a submanifold of $M$. By lemma ‎\ref{lemma 12} ‎for ‎all ‎‎$‎X,Y\in ‎\mathcal{X}(N)‎$ ‎we ‎have ‎‎$‎\beta(X,Y)=J\beta(X,Y)‎$‎. Now consider ‎$‎N‎$ ‎as a ‎submanifold ‎of ‎‎$‎{Q}^{2n+p}_{c}‎$ ‎(by ‎‎$‎N‎\longrightarrow ‎M‎\longrightarrow‎‎\mathbb{Q}‎$‎) and let ‎$‎H‎$ ‎be the ‎mean ‎curvature ‎of ‎‎$‎N‎$ ‎in ‎‎$‎‎\mathbb{Q}‎$. ‎We ‎choose ‎‎$‎\eta‎$ ‎parallel ‎to ‎‎$‎H=\sum_{i=1}^{3}‎\alpha‎(e_{i},e_{i})+‎\alpha‎(Je_{i},Je_{i})‎$ ‎such ‎that ‎‎$‎c+\|\eta\|^{2}=‎\frac{S}{30}‎$, ‎where ‎‎$‎\{e_{i},Je_{i}\}_{i=1, 2, 3}‎$ ‎is an ‎adapted ‎frame by ‎lemma ‎‎\ref{adapted frame} ‎for ‎‎$‎N‎$, ‎‎$‎S‎$ ‎is ‎the scalar ‎curvature ‎of ‎‎$‎N‎$ and $‎\alpha‎‎$ ‎is the second fundamental form of ‎$‎M‎$ ‎as a ‎submanifold ‎of ‎‎$‎‎\mathbb{Q}‎$ ‎via ‎‎$‎f‎$‎‎‎. Therefore ‎$‎\eta\in \Gamma‎(T^{\perp}_{f})‎$ ‎and ‎‎for ‎all ‎‎$‎X,Y\in‎\mathcal{X}(N)‎$‎, ‎‎$‎T^{M}(X,Y)=T^{N}(X,Y)‎$‎ where ‎$‎T^N‎$‎ and ‎$‎T^M‎$‎ are the torsion tensors of the canonical Hermitian connections on ‎$‎N‎$‎ and ‎$‎M‎$‎, respectively. Next, ‎‎$\nabla J‎$ ‎has ‎type ‎$‎‎\frac{S}{30}‎$ on $N$ so by a computation as in the proof of theorem \ref{main theorem 2} we have ‎‎$‎‎\alpha‎(X,Y)=\langle X, Y\rangle\eta‎$, ‎for ‎all ‎‎$‎X,Y\in ‎\mathcal{X}(N)‎$‎. Now the result follows by ‎theorem \ref{main theorem 1}‎.‎
\end{proof}‎‎
\begin{remark}‎
If there is a 6-dimensional factor $N$ in the Nagy decomposition of a nearly K\"aher manifold ‎$‎M‎$  ‎then ‎‎$‎N‎$ ‎is a‎n ‎‎‎‎embedded 6-dimensional strictly nearly K\"ahler  ‎submanifold of ‎$‎M‎$ ‎and by following ‎proposition there exist ‎$‎\eta\in ‎\Gamma‎(T^{\perp}_{f})N‎$ ‎such ‎that the ‎complex ‎and ‎invariant ‎umbilic distribution in direction ‎$‎\eta‎$ ‎is ‎non-trivial ‎and $‎N‎$ ‎is ‎locally ‎isometric ‎with a ‎leaf ‎of the foliation and so it is ‎locally ‎homogeneous.‎‎
\end{remark}‎
\begin{proposition}‎
Let ‎$‎f:(M^{2n},g,J)‎\longrightarrow‎\mathbb{Q}^{2n+p}_{c}‎$ ‎be an‎ ‎isometric ‎immersion ‎from a complete strictly ‎nearly ‎K\"ahler ‎manifold ‎into a‎ space ‎form‎. Then there is a direction such that the ‎complex ‎and ‎invariant ‎umbilic distribution is not trivial each leaf of ‎the foliation is strictly 6-dimensional nearly K\"ahler manifold.‎‎
\end{proposition}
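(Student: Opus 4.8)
The plan is to reduce to the simply connected situation already handled by Theorem \ref{main theorem 1} and then transport the conclusion back down to $M$, the only genuinely new ingredient being the passage to the universal cover.

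First I would pass to the Riemannian universal covering $\pi:\tilde M\longrightarrow M$. Strict near-Kählerness is the pointwise condition $\ker r=0$, so $(\tilde M,\pi^{*}g,\pi^{*}J)$ is again complete, simply connected and strictly nearly Kähler, and $\tilde f:=f\circ\pi:\tilde M\longrightarrow\mathbb{Q}^{2n+p}_{c}$ is an isometric immersion. Since $\pi$ is a local isometry, the second fundamental form, the normal connection and the torsion $T$ of $\tilde f$ are the $\pi$-pullbacks of those of $f$; hence for any normal field $\eta$ and its pullback $\pi^{*}\eta$ the distributions $\Delta,\Delta',\Delta''$ and $D=\Delta\cap\Delta'\cap\Delta''$ correspond under $\pi$, and every leaf of the resulting foliation downstairs is a local isometric image of a leaf upstairs.

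Next I would apply Theorem \ref{main theorem 1} to the complete simply connected strictly nearly Kähler immersion $\tilde f$. This yields an involutive, complex, torsion-invariant umbilic foliation on $\tilde M$ whose leaves are $6$-dimensional nearly Kähler and locally homogeneous, each coinciding with a $6$-dimensional factor in the Nagy decomposition of $\tilde M$. In particular this foliation is non-trivial, so by the Remark following Proposition \ref{pro3} there is a direction $\eta$ (equivalently, a $6$-dimensional strictly nearly Kähler factor $N$) for which the complex and invariant umbilic distribution is non-trivial. By Theorem \ref{main theorem 2} each leaf $L$ has constant type $c+\|\eta\|^{2}=S/30$, and by Proposition \ref{6-dim} its scalar curvature $S$ is positive, so $\nabla J$ does not vanish on $L$; being a $6$-dimensional non-Kähler nearly Kähler manifold, $L$ is strictly nearly Kähler (equivalently $\ker r=0$ on $L$, which also follows from $\ker r=0$ on $\tilde M$ since $r$ is block diagonal on a Riemannian product).

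Finally I would descend. Taking $\eta$ proportional to the mean curvature of the umbilic leaves in $\mathbb{Q}$ — which by Theorem \ref{main theorem 2} is the canonical normalization forced by $\eta=3H$ and $c+\|\eta\|^{2}=S/30$ — makes $\eta$ a geometric invariant of $\tilde f$, hence equivariant under the deck group of $\pi$; therefore the direction $\eta$ and the distribution $D$ are the pullbacks of a normal field and a distribution on $M$, and the foliation descends with its leaf structure. Its leaves on $M$ are the $\pi$-images of the leaves above, so they are strictly $6$-dimensional nearly Kähler, which is the assertion. The step I expect to be the real obstacle is precisely this descent: one must verify that the direction $\eta$ selected upstairs can be chosen deck-invariantly (the mean-curvature normalization is the natural candidate) so that the $6$-dimensional factor and its foliation are genuinely defined on $M$ rather than only on its cover; the existence of a $6$-dimensional factor itself is already delivered by Theorem \ref{main theorem 1} once one is on the simply connected cover.
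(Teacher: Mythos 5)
Your route is genuinely different from the paper's, and it contains two real gaps. The smaller one is a circularity: you invoke Proposition \ref{6-dim} to conclude that the scalar curvature of a leaf $L$ is positive, but every assertion in Proposition \ref{6-dim} carries \emph{strict} nearly K\"ahler as a hypothesis, which is exactly what you are trying to prove about $L$; a K\"ahler leaf would have ``constant type'' $c+\|\eta\|^2=0$ and Proposition \ref{6-dim} would simply not apply. Your parenthetical alternative --- that $\ker r=0$ on $\tilde M$ forces $\ker r=0$ on each factor of a Riemannian product because $r$ is block diagonal --- is the correct way to get strictness, but it is only available upstairs, where Theorem \ref{main theorem 1} identifies the leaves with Nagy factors.

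The serious gap is the descent, which you flag as the obstacle but do not close. The mean-curvature normalization makes $\eta$ canonical \emph{relative to a chosen foliation}, but the foliation itself need not be deck-invariant: if the Nagy decomposition of $\tilde M$ contains two isometric $6$-dimensional factors, a deck transformation can interchange them (a fixed-point-free composition of a swap with motions of the remaining factors), carrying your foliation, and with it your $\eta$, to a different foliation with a different direction. Nothing in your argument excludes this, so neither $D$ nor $\eta$ is shown to be a pullback from $M$, and the conclusion never reaches $M$ itself. The paper avoids the cover entirely and argues by contradiction directly on $M$: if a leaf $N$ is K\"ahler, then $T^N=T^M=0$ along $N$, and the Gauss equation of $f$ together with umbilicity gives constant type $c+\|\eta\|^2=0$. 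When $c=0$ this forces $\eta=0$, hence $\alpha(X,\cdot)=0$ for $X$ tangent to $N$, hence $T(X,\cdot)\equiv 0$, i.e. $\ker r\neq 0$ on $M$, contradicting strictness of $M$; when $c\neq 0$, the Gauss equation of $N\subset M$ gives $S\geq 0$ while the Ferreira--Tribuzy bound $S\leq 2(2n)^2(c+\|H\|^2)$ for K\"ahler submanifolds of space forms, combined with $\eta=3H$ from Theorem \ref{main theorem 2} and $c+\|\eta\|^2=0$, gives $S\leq 0$; so $S=0$, $\beta\equiv 0$, and $N$ is flat, contradicting completeness. This direct curvature argument is precisely what lets the paper dispense with simple connectedness, which is where your reduction breaks down.
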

\begin{proof}‎
If a leaf $N$ of our foliation is ‎K\"ahler, ‎then ‎‎$‎T^{N}(X,Y)=T^{M}(X,Y)=(\nabla_{X}J)Y‎$ and  ‎‎
 \begin{align*}‎‎
‎\|(\nabla_{X}J)Y\|^{2}=&-\langle R_{X,Y}X,Y\rangle+\langle R_{X,Y}JX,JY\rangle\\‎
‎&=\langle \alpha(X,Y),\alpha(Y,X)\rangle-\langle \alpha(X,X),\alpha(Y,Y)\rangle‎\\&c(\langle X,Y\rangle\langle Y,X\rangle-\langle X,X\rangle\langle Y,Y\rangle\\‎&-\langle \alpha(X,JY),\alpha(Y,JX)\rangle+\langle \alpha(X,JX),\alpha(Y,JY)\\‎&c(\langle X,JY\rangle\langle Y,JX\rangle-\langle X,JX\rangle\langle Y,JY\rangle\\
‎&=(\|\eta\|^{2}+c)(-\langle X,Y\rangle^{2}+\langle X,X\rangle\langle Y,Y\rangle\\&+\langle X,JY\rangle\langle Y,JX\rangle)\\‎
‎&=(\|\eta\|^{2}+c)(\|X\|^{2}\|Y\|^{2}-\langle X,Y\rangle^{2}-\langle J
X,Y\rangle^{2})‎,
‎\end{align*}‎‎
where $‎‎\alpha‎‎$ is the ‎second ‎fundamental ‎form ‎‎‎of ‎‎$‎f‎$. Hence ‎$‎\|\eta\|^{2}+c‎=0‎$‎. Now ‎if ‎‎$‎c=0‎$ then the ‎space ‎form ‎is ‎isometric ‎with the ‎Euclidean ‎space and ‎‎$‎\eta=0‎$ ‎and ‎‎$‎‎\alpha‎(X,Y)=0‎$, ‎for ‎all ‎‎$‎X,Y\in \mathcal{X}(N)‎$‎. By using the above relation again, ‎we ‎have ‎‎$‎T(X,Y)=0‎$, for ‎$‎X\in T_{x}N‎$ , ‎‎‎‎$‎Y\in ‎T_{x}M‎$, which ‎means ‎that the ‎tensor ‎‎$‎r‎$ ‎on ‎‎$‎M‎$ ‎has ‎non-zero kernel and this is a contradiction, because ‎$‎M‎$ ‎is strictly nearly ‎K\"ahler. If ‎‎$‎c\neq 0‎$, we consider ‎$‎N‎$ ‎as a ‎submanifold ‎of ‎‎$‎‎\mathbb{Q}‎$ (‎by  ‎‎$‎‎N‎\hookrightarrow‎ ‎M‎\longrightarrow‎‎\mathbb{Q}‎‎$). By the Guass equation and equality ‎‎‎$‎\alpha(X,Y)=\langle X,Y\rangle\eta‎$, for ‎$‎X,Y\in \mathcal{X}N‎$,‎
\begin{align}‎‎\label{relation 2}‎‎
K(e_{i},e_{j})+K(e_{i},Je_{j})=\langle \beta(e_{i},e_{j})‎,\beta(e_{i},e_{j})‎\rangle- \langle \beta(e_{i},e_{i})‎,\beta(e_{j},e_{j})‎\rangle
\end{align}
\[‎‎‎‎‎‎‎+\langle \beta(e_{i},Je_{j})‎,\beta(e_{i},Je_{j})‎\rangle-\langle \beta(e_{i},e_{i})‎,\beta(Je_{j},Je_{j})‎\rangle=2\|\beta(e_{i},e_{j})\|^{2}‎\geq ‎0‎,
‎‎\]
where ‎$‎\beta‎$ ‎is ‎the second ‎fundamental ‎form of ‎‎$‎N‎$ ‎as a‎ submanifold ‎of ‎‎$‎M‎$‎ and ‎$‎\{e_{i},Je_{i}\}_{i=1,\cdots,3}‎$ ‎is an ‎adapted ‎frame. ‎By ‎lemma ‎‎\ref{lemma 12} ‎and similar ‎computations as above, we have ‎ ‎‎
‎$$K(e_{i},Je_{j})=K(Je_{i},e_{j}),‎\qquad ‎K(e_{i},e_{j})=K(Je_{i},Je_{j})‎\qquad ‎K(e_{i},Je_{i})=\|\beta(e_{i},e_{i})\|^{2}‎\geq ‎0‎.$$‎‎‎
Hence the scalar curvature ‎$‎S‎$ ‎of‎ ‎$‎N‎$ must be ‎non-negative,‎
$$S=\sum_{i=1}^{3}‎K‎(e_{i},e_{j})+K(e_{i},Je_{j})+K(Je_{i},Je_{j})‎\geq ‎0‎,$$‎‎‎‎‎
But by theorem 7 in ‎\cite{Fer}, ‎the scalar ‎curvature ‎of an isometrically immersed ‎K\"ahler ‎submanifold of a  ‎space ‎form ‎‎satisfies the following inequality 
$$S‎‎\leq‎ ‎2(2n)^{2}(c+\|H\|^{2})‎,$$
hence the ‎scalar ‎curvature ‎of ‎‎$‎N‎$ ‎must ‎be ‎non-positive by ‎theorem ‎\ref{main theorem 2} ‎and ‎lemma ‎‎\ref{lemma 12}‎‎. Thus ‎‎$‎S=0‎$‎. Hence ‎‎‎‎$‎\beta‎\equiv ‎0‎$‎ by (‎‎\ref{relation 2}) ‎‎and ‎‎$‎N‎$ ‎must ‎be ‎locally ‎isometric to an Euclidean ‎affine ‎space‎. This contradicts the completeness of ‎$‎M‎$.‎‎
\end{proof}‎
‎\paragraph{}
 An open connected subset $U\su M$ is saturated if each leaf of the complex and invariant umbilic foliation in $U$ is maximal in $M$. If $M$ is complete we may put $U=M$. We consider the quotient space $V=U/D$ of the leaves in $U$ (each leaf is an equivalence class) with the projection map $\pi:U^{2n}\longrightarrow V^{2n-6}=U^{2n}/D$. In general, $V$ is not a manifold. It could fail to be Hausdorff and it may be a $V$ or $QF$-manifold. But if each leaf of the complex and invariant umbilic foliation in $U^{2n}$ is complete then $V$ becomes a manifold \cite{Hei4}. In this case, $V$ is called the foliation space of the complex and invariant umbilic foliation.\\
 In \cite{Hei4} we equipped $V$ with a suitable metric and complex structure such that $V$ becomes to a quasi-K\"ahler manifold and we used this in parametrization of Euclidean nearly K\"ahler submanifolds. In the next theorem we introduce a metric and an almost complex structure on $V$ such that it becomes a nearly K\"ahler manifold.
 \begin{remark}
 In the next theorem we will use Rummler-Sullivan criterion: there exist on a compact manifold $M$ a suitable metric such that the leaves defined on $M$ are minimal if and only if there exist an $m$-form $\chi$ positive on the leaves and relativity closed such that $d\chi(X_{1},\dots,X_{m},Y)=0$ where $X_{1},\dots,X_{m}$ are tangent to the foliation \cite{Rum,Sul}.
 \end{remark}
 \begin{theorem}
 Let $f:M^{2n}\longrightarrow\mathbb{R}^{2n+p}$ be an isometric immersion from a nearly K\"ahler manifold into the Euclidean space. If each leaf of the complex and invariant umbilic foliation is complete then the leaf space is an almost complex manifold whose top cohomological group is non-trivial. If $V$ (the leaf space) is compact and $M$ is complete then by choosing a suitable metric on $V$, the projection map $\pi:M\longrightarrow V$ is a Riemannian submersion such that $\pi\circ J^{M}=J^{V}\circ M$, that is 
 $\pi$ is an almost Hermitian submersion. In particular, $V$ is a nearly K\"ahler manifold.
 \end{theorem}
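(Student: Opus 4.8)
The plan is to realize $V$ as the base of an almost Hermitian Riemannian submersion $\pi:M\lo V$ and to transport the nearly K\"ahler identity through $\pi$ by an O'Neill-type computation. Since the foliation $D$ is complex, the normal distribution $H=D^{\perp}$ is $J^{M}$-invariant (the metric is Hermitian, so $J^{M}$-invariance of $D$ passes to its orthogonal complement), and consequently $J^{M}$ commutes with the horizontal and vertical projections $\mathcal{H},\mathcal{V}$. When the leaves are complete, $V$ is a manifold and $\pi$ is a submersion \cite{Hei4}; I first define $J^{V}$ by $J^{V}(d\pi\,X)=d\pi(J^{M}X)$ for horizontal $X$ and check $(J^{V})^{2}=-\mathrm{Id}$. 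The only real content at this stage is well-definedness: $J^{V}$ must be independent of the point chosen in a fibre, i.e. $J^{M}|_{H}$ must be holonomy-invariant. I would derive this from $\bar{\nabla}J=0$ together with the $\bar{\nabla}$-compatibility of $D$ (its defining conditions are phrased through $\bar{\nabla}$ and its torsion), so that the transverse complex structure is preserved by the holonomy of the foliation. This already gives the first assertion that the leaf space is an almost complex manifold.

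For the statement about top cohomology I would invoke the Rummler--Sullivan criterion recorded in the preceding remark. By Theorem \ref{main theorem 2} every leaf is minimal, so the foliation is taut and carries a characteristic $6$-form $\chi$ that is positive on the leaves and relatively closed. Tautness then produces a closed basic transverse $(2n-6)$-form, which descends to a nowhere-vanishing closed top-degree form $\omega_{V}$ on $V$; its class is non-trivial in the top (basic) cohomology, identified with $H^{2n-6}(V)$ since $V$ is a manifold. When $V$ is moreover compact this is immediate by integration, $\int_{V}\omega_{V}>0$.

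Next I would put the metric on $V$ and promote $\pi$ to a Riemannian, indeed almost Hermitian, submersion, and it is here that compactness of $V$ and completeness of $M$ enter. I define $g^{V}$ by $g^{V}(d\pi X,d\pi Y)=g^{M}(X,Y)$ on horizontal lifts, and the main point to prove is that this is well-defined, i.e. that $g^{M}$ is bundle-like for $D$ (the transverse metric is holonomy-invariant). This amounts to Reinhart's condition $(\mathcal{L}_{V}g)(Y,Z)=0$ for vertical $V$ and basic horizontal $Y,Z$, equivalently the vanishing of the symmetric vertical part of $\nabla^{M}$ on $H$; I expect the principal difficulty to lie precisely in this step. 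I would establish it using the invariance of $H$ under $\bar{\nabla}$, the minimality of the leaves (Theorem \ref{main theorem 2}) and the Rummler--Sullivan characteristic form, with compactness of $V$ used to globalise. Granting this, $g^{V}(J^{V}\cdot,J^{V}\cdot)=g^{V}(\cdot,\cdot)$ follows from the Hermitian property of $g^{M}$, while $d\pi\circ J^{M}=J^{V}\circ d\pi$ holds by construction, so $\pi$ is an almost Hermitian submersion.

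Finally, with the almost Hermitian submersion in hand, the nearly K\"ahler property of $V$ is a short computation. For $a,b\in\Gamma(TV)$ with basic horizontal lifts $X,Y$, O'Neill's formula gives that $\nabla^{V}_{a}b$ lifts to $\mathcal{H}\nabla^{M}_{X}Y$, while $J^{M}X$ lifts $J^{V}a$. Since $J^{M}$ commutes with $\mathcal{H}$ and $\mathcal{V}$, expanding $(\nabla^{V}_{a}J^{V})b=\nabla^{V}_{a}(J^{V}b)-J^{V}(\nabla^{V}_{a}b)$ and lifting yields
\[
\big((\nabla^{V}_{a}J^{V})b\big)^{\mathcal{H}}=\mathcal{H}\big((\nabla^{M}_{X}J^{M})Y\big).
\]
Setting $b=a$ and using the defining nearly K\"ahler identity $(\nabla^{M}_{X}J^{M})X=0$ on $M$ (Proposition \ref{definition nk}) gives $(\nabla^{V}_{a}J^{V})a=0$, so $V$ is nearly K\"ahler. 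Thus the entire argument reduces to the well-definedness and bundle-like step of the third paragraph; the transport of the nearly K\"ahler identity is automatic once $\pi$ is an almost Hermitian Riemannian submersion whose fibres and horizontal spaces are $J$-invariant.
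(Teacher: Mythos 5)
Your overall route is the same as the paper's: Rummler--Sullivan for the cohomological statement, an inherited metric making $\pi$ an almost Hermitian submersion, and transport of the nearly K\"ahler identity through $\pi$. Your final step is fine and is essentially the paper's argument (the paper gets the $\pi$-relatedness of $h((\nabla_{X}J)Y)$ and $(\nabla_{X'}J)Y'$ from Proposition 3.5 of \cite{Fal2}; you rederive it from O'Neill's formulas). But there is a genuine gap in your cohomology paragraph. Your inference ``nowhere-vanishing closed top-degree form on $V$ $\Rightarrow$ non-trivial class'' is false on a non-compact manifold (the volume form of $\mathbb{R}^{k}$ is exact), and the first assertion of the theorem is made under completeness of the leaves only, not compactness of $V$; your fallback, integration over $V$, covers only the compact case. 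The paper's actual argument is a wedge trick that you never produce: with $\chi$ the Rummler--Sullivan $6$-form (positive on the leaves, relatively closed) and $\nu$ the transverse volume form, suppose $\nu=d\tau$ with $\tau$ basic of degree $2n-7$; then
\begin{equation*}
\chi\wedge\nu=\chi\wedge d\tau=(-1)^{6}\left\{d(\chi\wedge\tau)-d\chi\wedge\tau\right\},\qquad d\chi\wedge\tau=0
\end{equation*}
by relative closedness, so $\chi\wedge\nu$ would be exact; but $\chi\wedge\nu$ is a volume form on the compact oriented $M$, contradicting Stokes. That contradiction, not positivity of $\omega_{V}$ alone, is the content of the step. (If instead you meant to invoke the full tautness theorem of Masa relating minimality to non-vanishing top basic cohomology, that is a far heavier tool than the paper uses and still needs compactness of the ambient manifold.)

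A second, smaller issue: the bundle-like/holonomy-invariance property that makes $g^{V}$ well defined is exactly the point you flag and then skip (``Granting this\dots''). To be fair, the paper does not prove it either --- it asserts the existence of the inherited metric, leaning on the construction of the metric on $V$ carried out in \cite{Hei4} --- so this is a shared omission rather than a divergence; but as a self-contained proof your proposal does not close it, and your suggested ingredients (minimality of leaves plus the characteristic form) are not obviously sufficient, since bundle-likeness is a condition on the transverse, not the tangential, part of the metric.
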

 \begin{proof}
 Each leaf is minimal so if $\nu$ denotes the metric volume form of a leaf then it defines a non-trivial class in the basic cohomology $H(M/D=V)$. Indeed, let $\chi$  be the $m$-form given by Rummler-Sullivan criterion and suppose that $\nu=d\tau$ where $\tau\in \Omega^{2n-7}(V)$ then
$$\chi\wedge\nu=\chi\wedge d\tau=(-1)^{m}\{d(\chi\wedge\tau)-d\chi\wedge\tau\}$$
But $d\chi\wedge\tau=0$ because $\chi$ is relativity closed. Therefore $\chi\wedge\nu$ is exact and this is a contradiction, because $\chi\wedge\nu$ is a volume form of the complete nearly K\"ahler manifold $M$.(note that $M$ is compact and orientable).\\
Now $V$ is invariant by the almost complex structure $J$ so it will be an almost complex manifold. By the structure of the foliation space $V$ and using the projection map $\pi$ we may put a metric on $V$, inherited from metric from $M$, such that $\pi$ becomes a Riemannian submersion, preserving the almost complex structures, i.e., $\pi_*\circ J^{M}=J^{V}\circ \pi_*$. Therefore $\pi$ is an almost Hermitian submersion. Finally we show that $V$ is a nearly K\"ahler manifold. If $X,Y$ are vector fields on $M$ which are $\pi$-related to $X^{'},Y^{'}$ on $V$ then
$$\omega^{M}(X,Y)=g^{M}(X,JY)=g^{V}(X^{'},JY^{'})\circ\pi=(\pi^{*}\omega^{V})(X,Y).
$$
Therefore, on the horizontal distribution $\omega^{M}$ and $d\omega^{M}$ coincide with $\pi^{*}\omega^{V}$ and $\pi^{*}(d\omega^{V})$, respectively. Now $h((\nabla_{X}J)Y)$ is a vector field $\pi$-related to $(\nabla_{X^{'}}J)Y^{'}$ by proposition 3.5 of \cite{Fal2} where $h$ denote the horizontal component of a vector field that tangent to $M$, and we know that $\pi^{*}$ is a linear isomorphism on invariant differential forms so by proposition \ref{definition nk} we conclude that $V$ is a nearly K\"ahler manifold.
 \end{proof}
 Next we need some facts about Riemannain submersions:
  \begin{definition}
Let $(M,g,J)$ and $(B,g^{'},J^{'})$ be almost Hermitian manifolds. A Riemannain submersion $\pi:M\longrightarrow B$ is called an almost Hermitian submersion if it is an almost complex map i.e.,  $\pi_{*}\circ J=J^{'}\circ\pi_{*}$. An almost Hermitian submersion is a nearly K\"ahler submersion if the total space is nearly K\"ahler.
\end{definition}
The tangent bundle of $M$ can be decomposed as the Whitney sum of vertical distribution $V=Ker\pi_{*}$ and the complementary orthogonal metric $H$ (the horizontal distribution). Denote by $v$ and $h$ the projection on vertical and horizontal distributions, respectively. The O'Neill tensors $B$ and $A$ are defined by
\begin{align*}
&B_{X}Y=h(\nabla_{vX}vY)+v(\nabla_{vX}hY),\\
& A_{X}Y=v(\nabla_{hX}hY)+h(\nabla_{hX}vY).
\end{align*}
Here $B$ acts on each fiber (each leaf in our case) as the second fundamental form and  $A$ measures the distance of horizontal distribution from integrability.
\paragraph*{}
With the same argument as in the proposition 1 in \cite{Fal2} we can prove the following.
\begin{theorem}
Let $f:M^{2n}\longrightarrow\mathbb{R}^{2n+p}$ be an isometric immersion from a nearly K\"ahler manifold into the Euclidean space. the orthogonal complement of the complex and invariant umbilic distribution with respect to a Riemannian metric  is an integrable distribution and each integral submanifold of this distribution is totally geodesic in $M$.
\end{theorem}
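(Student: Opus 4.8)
The plan is to identify the orthogonal complement $D^{\perp}$ with the horizontal distribution $\mathcal H$ of the nearly K\"ahler submersion $\pi:M\lo V$ produced in the previous theorem, and then to reduce the entire statement to the vanishing of the O'Neill integrability tensor $A$ on $\mathcal H$. Write $\mathcal V=D=\ker\pi_{*}$; both $\mathcal V$ and $\mathcal H$ are $J$-invariant since $D$ is complex. For horizontal vector fields $X,Y$ one has $A_{X}Y=v(\nabla_{X}Y)$, which is skew-symmetric in $X,Y$, so that $v[X,Y]=A_{X}Y-A_{Y}X=2A_{X}Y$. Hence $\mathcal H$ is integrable exactly when $A_{X}Y=0$ for all horizontal $X,Y$, and in that case the second fundamental form of an integral leaf, namely $v(\nabla_{X}Y)=A_{X}Y$, vanishes as well, so the leaf is automatically totally geodesic in $M$. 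Thus the theorem is equivalent to the single claim $A\equiv0$ on $\mathcal H\times\mathcal H$, and I would organize everything around proving this.

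To establish $A\equiv0$ I would compute the vertical component $\langle A_{X}Y,V\rangle=\langle\nabla_{X}Y,V\rangle$ for $X,Y\in\mathcal H$ and $V\in\mathcal V$, exactly in the spirit of Proposition 1 of \cite{Fal2}, feeding in three ingredients. First, the flat ambient structure: from the Gauss formula $\tilde\nabla_{X}Y=\nabla_{X}Y+\alpha(X,Y)$ in $\mathbb R^{2n+p}$ together with the umbilic identity $\alpha(X,V)=\langle X,V\rangle\eta=0$ (valid because $X\perp V$ and $V\in\Delta$), one rewrites $\langle\nabla_{X}Y,V\rangle=-\langle Y,\nabla_{X}V\rangle$ with $\nabla_{X}V=\tilde\nabla_{X}V$. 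Second, the torsion algebra: since $\mathcal V=D$ is invariant under the totally skew torsion $T$ and $T(V,\cdot)\in\Delta$ for $V\in\mathcal V$, cyclic symmetry forces $T(\mathcal H,\mathcal V)\su\mathcal H$, which controls the mixed components of $(\nabla_{X}J)V$. Third, the nearly K\"ahler identities of Lemma \ref{gray formula}, in particular $(\nabla_{X}J)Y+(\nabla_{Y}J)X=0$ and $J(\nabla_{X}J)Y=-(\nabla_{X}J)JY$, which let me trade derivatives of $J$ against $A$ and its $J$-conjugate. Expanding through $(\nabla_{X}J)Y=\nabla_{X}JY-J\nabla_{X}Y$ and using $J$-invariance of the two distributions gives $\langle(\nabla_{X}J)Y,V\rangle=\langle A_{X}JY,V\rangle+\langle A_{X}Y,JV\rangle$, so that the vertical torsion and $A$ itself are pinned down simultaneously.

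The hard part will be exactly this coupling between $A$ and $J$: showing that $A_{X}$ commutes with $J$ on $\mathcal H$, equivalently that the torsion $T(X,Y)=(\nabla_{X}J)Y$ has no vertical component for horizontal $X,Y$. For a generic almost Hermitian submersion $A_{X}J\neq JA_{X}$, so the vanishing has to be extracted from the specific geometry here: the fibres are the $6$-dimensional strictly nearly K\"ahler leaves of Theorem \ref{main theorem 2}, minimal and umbilic in the flat ambient with mean curvature $\eta$ of constant length $\|\eta\|^{2}=S/30$. I expect the decisive step to be a Codazzi-type computation: differentiating the umbilic relation $\alpha(V,W)=\langle V,W\rangle\eta$ along horizontal directions and using flatness of $\tilde\nabla$ to force $h(\nabla_{X}V)=0$, i.e.\ that $\nabla_{X}$ preserves $\mathcal V$ for horizontal $X$. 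Via the skew-symmetry $\langle A_{X}Y,V\rangle=-\langle Y,h(\nabla_{X}V)\rangle$ this yields $A\equiv0$, whence the integrability and total geodesy of $D^{\perp}$. This Codazzi step, run alongside the nearly K\"ahler identities as in \cite{Fal2}, is the only genuinely nontrivial point; the rest is bookkeeping with the $J$-invariant splitting.
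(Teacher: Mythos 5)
Your reduction of the theorem to $A\equiv 0$ (integrability of $\mathcal{H}=D^{\perp}$ together with total geodesy of its leaves) is exactly the paper's starting point, and your three listed ingredients are all individually correct. The genuine gap is in what you call the decisive step: differentiating the umbilic relation $\alpha(V,W)=\langle V,W\rangle\eta$ in a horizontal direction and invoking flatness of the ambient space cannot ``force $h(\nabla_{X}V)=0$'', because the O'Neill tensor drops out of that computation identically. Since $W\in\Delta$ implies $\alpha(Z,W)=\langle Z,W\rangle\eta$ for \emph{every} $Z\in TM$, each of the terms $\alpha(\nabla_{X}V,W)$, $\alpha(V,\nabla_{X}W)$, $\alpha(\nabla_{V}X,W)$ collapses to an inner product times $\eta$, and these recombine into derivatives of the metric; the flat-space Codazzi identity $(\tilde{\nabla}_{X}\alpha)(V,W)=(\tilde{\nabla}_{V}\alpha)(X,W)$ then reduces to $\langle V,W\rangle\nabla^{\perp}_{X}\eta=\langle X,\beta(V,W)\rangle\eta-\alpha\bigl(X,\beta(V,W)\bigr)$, where $\beta$ is the second fundamental form of the fibre in $M$. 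This constrains only normal data ($\nabla^{\perp}\eta$ and $\alpha$ on horizontal slots); the tangential quantity $h(\nabla_{X}V)=A_{X}V$ never appears. The extrinsic flat geometry is blind to $A$, so no Codazzi computation of this kind can close the argument.

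What actually closes it in the paper is intrinsic, and it is precisely the combination your plan leaves out. The paper uses: (i) the quasi-K\"ahler submersion identity of \cite{Fal2}, $A_{JX}=-J\circ A_{X}=A_{X}\circ J$ --- note this is \emph{anti}-commutation of $A_{X}$ with $J$, the opposite of the commutation you aim to prove; the whole point is that commutation and anti-commutation together force $A=0$; (ii) the fibre identity $B_{V}JX=JB_{V}X$ (a consequence of Lemma \ref{lemma 12}), which kills the vertical part of the mixed torsion and gives $T(V,X)=A_{JX}V-JA_{X}V=-2JA_{X}V$; and (iii) --- the step entirely absent from your proposal --- the antisymmetry $T(X,V)=-T(V,X)$ of the nearly K\"ahler torsion: computing directly, $h(T(X,V))=A_{X}JV-JA_{X}V=-2JA_{X}V$, while antisymmetry and (ii) force $h(T(X,V))=+2JA_{X}V$, whence $JA_{X}V=0$ and $A\equiv 0$. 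Your observation $T(\mathcal{H},\mathcal{V})\subseteq\mathcal{H}$ is true but too weak: it says nothing about $v(T(\mathcal{H},\mathcal{H}))$, which equals $-2JA_{X}Y$ and is exactly what must vanish. Replacing your Codazzi step by this torsion-antisymmetry computation turns your outline into the paper's proof.
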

\begin{proof}
 It is enough to show that the O'Neill tensor $A$ on $M$ vanishes. Let $X$ be a vector field and $V$ be a vertical field on $M$. Since
 \begin{align}\label{1}
 h(\nabla_{V}X)=h(\nabla_{X}V)=A_{X}V
 \end{align}
  we have$\nabla_{V}X=B_{V}X+A_{X}V$. Moreover, $B_{V}JX=JB_{V}X$ and we have $T(V,X)=\nabla_{V}JX-J\nabla_{V}X=A_{X}JV-JA_{X}V$. In \cite{Fal2} it was proved that for a quasi-K\"ahler submersion,
\begin{align}\label{2}
  A_{JX}=-J\circ A_{X}=A_{X}\circ J.
\end{align}
Thus
 \begin{align}\label{3}
  T(V,X)=\nabla_{V}JX-J\nabla_{V}X=A_{JX}V-JA_{X}V=-2JA_{X}V,
  \end{align}
  using (\ref{1}) and (\ref{2}) we have
  \begin{align}\label{4}
  h(T(V,X))=h(\nabla_{V}JX-h(J\nabla_{V}X)=A_{JX}V-JA_{X}V=-2JA_{X}V,
  \end{align}
  but on nearly K\"ahler manifolds the torsion of intrinsic Hermitian connection $T$ is skew-symmetric thtat is, $T(X,V)=-T(V,X)$ and by (\ref{3}) and (\ref{4}) we conclude that $JA_{X}V=0$, that is, $A=0$.
\end{proof}
\begin{corollary}\label{main Corollary}
 Let $f:M^{2n}\longrightarrow\mathbb{R}^{2n+p}$ be an isometric immersion from a complete, simply connected nearly K\"ahler manifold into the Euclidean space.  There exist two integrable distributions $D, D^{\perp}$ on manifold $M$ such that leaves of the generated foliations by $D$ and $D^{\perp}$ are minimal and totally geodesic, respectively. Moreover, if for each $X\in D_{x}^{\perp}$ and $U\in D_{x}$ we have
 \begin{equation}
 R(X,JX,U,JU)=0,
 \end{equation}
 then locally $M$ is a product of  the foliation space of complex and invariant umbilic foliation and a 6-dimensional homogeneous nearly K\"ahler manifold which is isometric with the  corresponding factor in the  Nagy decomposition.
\end{corollary}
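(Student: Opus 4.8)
The two preceding theorems already supply the two foliations I need: by Theorem \ref{main theorem 2} the leaves of $D$ are minimal, and the vanishing of the O'Neill tensor $A$ proved above shows that $D^{\perp}$ is integrable with totally geodesic leaves. My plan is therefore to reduce the entire statement to the single claim that the fibres (the leaves of $D$) are in fact \emph{totally geodesic}. Decomposing $\nabla_ZU$ shows that, because $A=0$ already kills the horizontal part $A_ZU$ of $\nabla_ZU$ for horizontal $Z$, both $D$ and $D^{\perp}$ fail to be $\nabla$-parallel only through the second fundamental form $\beta$ of the fibres (the O'Neill tensor $B$ acting on vertical vectors): $D^{\perp}$ is parallel iff $B_UX=v(\nabla_UX)=0$ and $D$ is parallel iff $\beta(U,W)=h(\nabla_UW)=0$, and these two conditions coincide by the duality $\langle B_UX,W\rangle=-\langle X,\beta(U,W)\rangle$. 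So everything comes down to showing $\beta\equiv0$.

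The key step will be an O'Neill-type curvature identity. With $A=0$, evaluating O'Neill's equation on basic horizontal lifts gives, for vertical $U,V$ and horizontal $X,Y$,
\[
\langle R(U,V)X,Y\rangle=\langle B_UX,B_VY\rangle-\langle B_VX,B_UY\rangle .
\]
I will specialise this to $(U,V)=(U,JU)$ and $(X,Y)=(X,JX)$, use the pair symmetry $R(X,JX,U,JU)=R(U,JU,X,JX)$, and exploit the complex behaviour of $B$. From $B_U(JX)=JB_UX$ (shown in the previous theorem), together with $\beta(JU,W)=J\beta(U,W)$ from Lemma \ref{lemma 12} and the duality above, one obtains
\[
B_U(JX)=JB_UX,\qquad B_{JU}X=-JB_UX,\qquad B_{JU}(JX)=B_UX .
\]
Substituting these and using that $J$ is an isometry should collapse the identity to
\[
R(X,JX,U,JU)=2\,\|B_UX\|^{2}.
\]
The hypothesis $R(X,JX,U,JU)=0$ then forces $B_UX=0$ for all horizontal $X$ and vertical $U$, i.e. $\beta\equiv0$, so the fibres are totally geodesic. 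I note in passing that, since $U,JU\in\Delta$, the Gauss equation of $f$ makes the left-hand side vanish identically, so in the Euclidean case this curvature condition is automatically satisfied.

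Once $A=0$ and $\beta\equiv0$, both $D$ and $D^{\perp}$ are $\nabla$-parallel and $J$-invariant, so the de Rham decomposition theorem yields a local splitting of $(M,g,J)$ as a nearly K\"ahler product $V\times N$, where $N$ is a leaf of $D$ and $V$ (isometric via $\pi$ to a horizontal leaf, which is totally geodesic and $J$-invariant, hence itself nearly K\"ahler) is the foliation space; this is exactly a local Nagy-type decomposition. To finish I will upgrade $N$ to a homogeneous space: being totally geodesic in the complete manifold $M$ it is complete, and as a de Rham factor of the simply connected $M$ it is simply connected, so by Theorem \ref{main theorem 1} it is a complete simply connected $6$-dimensional strictly nearly K\"ahler Nagy factor, which by Butruille's classification \cite{But} is one of the four homogeneous $3$-symmetric spaces. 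The main obstacle I anticipate is the curvature identity $R(X,JX,U,JU)=2\|B_UX\|^{2}$: getting O'Neill's equation into the stated reduced form and correctly using the $J$-linearity of $B$ is the crux, after which the de Rham step and the homogeneity upgrade are routine.
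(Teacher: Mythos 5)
Your proposal supplies in full the argument that the paper itself leaves to a one-line citation: the paper's ``proof'' of Corollary \ref{main Corollary} is literally ``this is proved like proposition 3.21 in \cite{Fal2}'', and the content of that proposition is exactly your computation. The core of your argument checks out: with $A\equiv 0$, O'Neill's equation for the curvature $R(U,V,X,Y)$ ($U,V$ vertical, $X,Y$ horizontal) loses all derivative terms and reduces, up to sign convention, to $\langle B_UX,B_VY\rangle-\langle B_VX,B_UY\rangle$; the relations $B_U(JX)=JB_UX$ and $B_{JU}X=-JB_UX$ do follow from Lemma \ref{lemma 12} through the duality $\langle B_UX,W\rangle=-\langle X,\beta(U,W)\rangle$; and these collapse $R(X,JX,U,JU)$ to $\pm 2\|B_UX\|^{2}$, so the curvature hypothesis forces $\beta\equiv 0$, both distributions become $\nabla$-parallel and $J$-invariant, and de Rham (applicable since $M$ is complete and simply connected) gives the splitting. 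Your side remark is also correct and is a genuine strengthening the paper does not record: since $U,JU\in D_{x}\subseteq\Delta_{x}$ and $X,JX\perp U,JU$, all four terms $\alpha(X,U),\alpha(X,JU),\alpha(JX,U),\alpha(JX,JU)$ equal multiples of $\langle X,U\rangle\eta$ etc.\ and hence vanish, so the Gauss equation of $f$ makes $R(X,JX,U,JU)=0$ automatic for any space-form ambient; the ``moreover'' hypothesis of the corollary is redundant in this setting.

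The one genuine flaw is your final homogeneity step. You cannot invoke Butruille's classification \cite{But} to conclude that the leaf is homogeneous: that classification \emph{presupposes} homogeneity, and the implication ``complete, simply connected, $6$-dimensional strictly nearly K\"ahler $\Rightarrow$ homogeneous'' is precisely the open Problem \ref{pro1}, so your appeal to it is circular (indeed, if it were legitimate, the curvature hypothesis and the whole submersion argument would be unnecessary for homogeneity). The correct upgrade is already among your ingredients: Theorem \ref{main theorem 1} states that each leaf is an Ambrose--Singer manifold, i.e.\ locally homogeneous, and a complete, simply connected, locally homogeneous Riemannian manifold is homogeneous (Singer's theorem, equivalently the global form of the Ambrose--Singer theorem). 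Completeness and simple connectedness of the leaf come exactly as you argued --- it is totally geodesic in the complete $M$, and it coincides with a Nagy factor of the simply connected $M$. Replace the citation of \cite{But} by this argument and your proof is complete.
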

\begin{proof}
This is proved like proposition 3.21 in \cite{Fal2}.
\end{proof}
 \begin{remark}
  Note that the 6-dimensional factor in the Nagy decomposition is not necessarily homogeneous, therefore  this is a positive answer to problem \ref{pro1} of the introduction under the assumption of Corollary \ref{main Corollary}.
 \end{remark}

\end{document}